\newtheorem{theorem}{Theorem}
\newtheorem{lemma}[theorem]{Lemma}
\newtheorem{corollary}[theorem]{Corollary}
\newtheorem{proposition}[theorem]{Proposition}
\newtheorem{example}[theorem]{Example}
\newtheorem{remark}[theorem]{Remark}
\numberwithin{equation}{section}
\begin{document}
\title{Partial monoid actions and a class of restriction semigroups}
\author{Ganna Kudryavtseva}
\thanks{The author was partially supported by ARRS grant P1-0288}
\subjclass[2010]{Primary: 20M10; Secondary: 20M30,  08A99}

\keywords{Partial action, restriction semigroup, weakly $E$-ample semigroup, $W$-product, semidirect product, proper cover}

\begin{abstract}
We study classes of proper restriction semigroups determined by properties of partial actions underlying them. These properties include strongness, antistrongness, being defined by a homomorphism, being an action etc.   Of particular interest is the class determined by  homomorphisms, primarily because we observe that its elements, while being close to semidirect products,  serve as mediators  between general restriction semigroups and semidirect products or $W$-products in an embedding-covering construction. It is remarkable that this class does not have an adequate analogue if specialized to inverse semigroups.  $F$-restriction monoids of this class, called  ultra $F$-restriction monoids, are determined by homomorphisms from a monoid $T$ to the Munn monoid of a semilattice $Y$. We show that these are precisely the monoids $Y*_mT$ considered by Fountain, Gomes and Gould.  We obtain a McAlister-type presentation for the class given by strong dual prehomomorphisms and apply it to construct an embedding of ultra $F$-restriction monoids, for which the base monoid $T$ is free, into  $W$-products of semilattices by monoids. Our approach yields new and simpler proofs of two recent embedding-covering results by Szendrei. \end{abstract}
\maketitle

\section{Introduction}\label{s1}
Restriction semigroups, also known as weakly $E$-ample semigroups, are non-regular generalizations of inverse semigroups. These are semigroups with two additional unary operations which mimic the operations $a\mapsto a^{-1}a$ and $a\mapsto aa^{-1}$ on an inverse semigroup. Various aspects of restriction semigroups and their one-sided analogues have been extensively studied in the literature, see, e.g., \cite{FGG,G,H2} and references therein.

Proper restriction semigroups are analogues of $E$-unitary inverse semigroups which play a central role in the theory of inverse semigroups and its applications. Generalizing corresponding results for inverse and ample semigroups \cite{PR,KL,Law2}, Cornock and Gould \cite{CG} gave a structure theorem for proper restriction semigroups in terms of double partial actions of monoids on semilattices. This can be readily reformulated in terms of only one partial action, since each of the two partial actions is determined by the other one.  In the present paper we consider classes of proper restriction semigroups determined by the properties of this partial action. We show that the partial action is strong or antistrong if and only if the restriction semigroup satisfies a technical condition arising in \cite{CG}. $W$-products of  semilattices by monoids correspond to the situation where the partial action is an action, and semidirect products form their subclass corresponding to actions by automorphisms. More importantly, we single out a class of proper restriction semigroups determined by homomorphisms. We call elements of this class ultra proper restriction semigroups. This is a rich and important class, since its elements, while being close to semidirect products, arise as mediators between general restriction semigroups and $W$-products or semidirect products in an embedding-covering result. It is interesting that this class does not have an adequate analogue if specialised to the inverse case, ultra proper inverse semigroups being precisely semidirect products of semilattices by groups. This discrepancy between restriction and inverse semigroups is well illustrated by the fact that free  restriction monoids and semigroups are ultra proper, whereas free inverse monoids and semigroups are not. The subclass of ultra proper restriction semigroups which are also $F$-restriction (where $F$-restriction has a similar meaning as $F$-inverse in the inverse semigroup theory)  is shown to be equal to the class of monoids  $Y*_m T$ introduced by Fountain, Gomes and Gould~in~\cite{FGG}. Alternatively, this subclass can be described by the property that the underlying homomorphism has its range in the Munn monoid of the semilattice. We call elements of this subclass ultra $F$-restriction monoids.

Based on ideas from \cite{M} and \cite{MS}, we construct a globalization of a strong partial action underlying a proper restriction semigroup and obtain a McAlister-type theorem. We then specialize this construction to partial actions defining ultra $F$-restriction monoids $M(T,Y)$, where the base monoid $T$ is free. As a result, we obtain a semilattice $X$ and an action of $T$ on this semilattice such that the $W$-product $W(T,X)$ can be formed and, moreover, the initial monoid $M(T,Y)$ embeds into $W(T,X)$.  This construction is inspired by Szendrei's embedding of the free restriction monoid into a $W$-product \cite{S1} and generalizes it. We apply the constructed embedding in the following setting.  Let $S$ be an $A$-generated restriction monoid and $T=A^*$ be the $A$-generated free monoid. Modifying a construction from \cite{FGG}, we produce a partially defined action of $T$ on the semilattice of projections $Y$ of $S$ defining an ultra $F$-restriction (and even ample) monoid $M(T, Y)$, which  covers  $S$. 
If $S$ is a restriction semigroup, a variation of this construction produces an ultra proper restriction semigroup $M(T, Y)$, which covers $S$ and is a restriction subsemigroup of the ultra $F$-restriction semigroup $M(T, Y^1)$. 
The globalization construction enables us to embed  $M(T, Y)$ into a $W$-product $W(T, X)$, yielding new and simpler proofs of two embedding-covering results by Szendrei \cite{S1,S}. The first result \cite{S1} states that any restriction semigroup $S$ has a proper ample cover embeddable into a $W$-product. Our argument proving this is that $M(T, Y)$ provides such a cover, and we emphasize that the cover is ultra proper.  The second result \cite{S} states that any restriction semigroup can be embedded into an almost left factorizable restriction semigroup, that is, a $(2,1,1)$-quotient of a $W$-product. To show this, we construct a projection separating $(2,1,1)$-congruence $\kappa$ on $W(T, X)$, which extends the congruence on $M(T, Y)$ mapping it onto $S$, so that $S$ embeds into $W(T,X)/\kappa$. 

We conclude the introduction by pointing out that, after an early version of this paper existed, the author learned that ultra proper restriction semigroups and ultra $F$-restriction monoids were independently introduced and studied (from a somewhat different perspective) by Peter Jones in \cite{J}
 under the names almost perfect restriction semigroups and perfect restriction monoids, respectively. This terminology is motivated by an elegant characterization noticed in \cite{J} that a proper restriction semigroup is ultra proper if and only if the least congruence identifying all projections is perfect meaning that the product of classes is again a whole class. 

\section{Preliminaries}
\subsection{Restriction semigroups} In this section we recall the definition and basic properties of restriction semigroups \cite{G, GouldS, S1,S}. Further details, including a different approach to restriction semigroups, via generalized Green's relations, can be found in \cite{G,H2}.

A {\em restriction semigroup} is an algebra $(S, \cdot, ^*, ^+)$, where $(S,\cdot)$ is a semigroup and $^*$ and $^+$ are unary operations satisfying the following identities (here and in the sequel the multiplication in $S$ is denoted just by juxtaposition):
\begin{equation}\label{eq:axioms:star}
xx^*=x, \,\,\, x^*y^*=y^*x^*, \,\,\, (xy^*)^*=x^*y^*, \,\,\, x^*y=y(xy)^*;
\end{equation}
\begin{equation}\label{eq:axioms:plus}
x^+x=x, \,\,\, x^+y^+=y^+x^+, (x^+y)^+=x^+y^+, \,\,\, xy^+=(xy)^+x;
\end{equation}
\begin{equation}\label{eq:axioms:common}
(x^+)^*=x,\,\,\, (x^*)^+=x.
\end{equation}
If a restriction semigroup has an identity element $1$ with respect to multiplication, it follows from $1^*1=1$ and the dual axiom involving $^+$ that 
$1^*=1^+=1$. A restriction semigroup possessing an identity is called a {\em restriction monoid}. Restriction semigroups form a variety of algebras of type $(2,1,1)$ and restriction monoids form a variety of algebras of type $(2,1,1,0)$. 

The following identities follow from the axioms and will be frequently  used in the sequel (usually without reference):
\begin{equation}\label{eq:consequences}
(xy)^*=(x^*y)^*,\,\,\, (xy)^+=(xy^+)^+.
\end{equation}

A   homomorphism of restriction semigroups is required to preserve the multiplication and the operations  $^*$ and $^+$, that is to be a homomorphism of $(2,1,1)$-algebras, and not only a semigroups homomorphism as the term might seem to suggest. For emphasis, we will often use the terms $(2,1,1)$-homomorphisms, $(2,1,1)$-congruence, $(2,1,1)$-subalgebra, etc. Similar remarks apply when restriction semigroups are replaced by restriction monoids.

Let $S$ be a restriction semigroup. It follows from \eqref{eq:axioms:common} that 
$$
\{x^*\colon x\in S\}=\{x^+\colon x\in S\}.
$$
We denote this set by $E$. It can be easily deduced that $E$ is closed with respect to the multiplication, is a semilattice and also $x^*=x^+=x$ for all $x\in E$. It follows that $E$ is a $(2,1,1)$-subalgebra of $S$. It is called the {\em semilattice of projections} of $S$ and is denoted by $P(S)$. Note that a projection is necessarily an idempotent, but a restriction semigroup may contain idempotents which are not projections.

As it was mentioned in the introduction, restriction semigroups generalize inverse semigroups (considered as $(2,1,1)$ algebras where $x^*=x^{-1}x$ and $x^+=xx^{-1}$). A crucial result about inverse semigroups is the Ehresmann-Nambooripad-Schein theorem \cite{Law} which says that the category of inverse  semigroups is isomorphic to the category of inductive groupoids. It is of fundamental importance that this theorem can be extended to restriction semigroups: they form a category which is  isomorphic to the category of inductive categories \cite{H2}. This fact brings up a crucial insight as well as provides an evidence that restriction semigroups are naturally arising algebraic objects.

A restriction semigroup $S$ is called {\em ample} if for all $a,b,c\in S$:
$$
ac=bc \Rightarrow ac^+=bc^+ \,\,\,\, \text{ and } \,\,\,\, ca=cb\Rightarrow c^*a=c^*b.
$$
Under the correspondence between restriction semigroups and inductive categories, ample semigroups correspond to cancellative inductive categories.

Let $S$ be a restriction semigroup and  $E=P(S)$. For $a,b\in S$ we set $a\leq b$ provided that there is $e\in E$
with $a=eb$. This relation is a partial order called the {\em natural partial order} on $S$.  The following properties of restriction semigroups related to the partial order will be used throughout the paper.

\begin{lemma}\label{lem:lem1} Let $S$ be a restriction semigroup, $E=P(S)$ and $a,b\in S$. Then
\begin{enumerate}
\item \label{i:b1}
  $a\leq b$ if and only if  $a=bf$ for some  $f\in E$. 
\item \label{i:b2} $a\leq b$ if and only if $a=ba^*$ if and only if $a=a^+b$. 
 \item \label{i:b3}If  $ea=a$  ($ae=a$), where $e\in E$, then $e\geq a^+$ (respectively, $e\geq a^*$). 
\item \label{i:b4} $a\geq ae, ea$ for any $e\in E$.
\item \label{i:b5} The order $\geq$ is compatible with the multiplication, that is, $a\geq b$ implies $ac\geq bc$ and $ca\geq cb$ for any $c\in S$. 
\item\label{i:b6}  The order $\geq$ is compatible with the unary operations, that is, $a\geq b$ implies $a^*\geq b^*$ and $a^+\geq b^+$. 
  \end{enumerate}
\end{lemma}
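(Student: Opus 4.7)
My plan is to extract everything from the two identities $x^*y=y(xy)^*$ and $xy^+=(xy)^+x$, which let us convert a left factor in $E$ into a right factor and vice versa. This is the conceptual crux; once we have it, parts \eqref{i:b1}–\eqref{i:b2} are immediate, and \eqref{i:b3}–\eqref{i:b6} fall out by short algebraic manipulations using the axioms \eqref{eq:axioms:star}–\eqref{eq:axioms:common} and the derived identities \eqref{eq:consequences}.

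I would begin by proving \eqref{i:b1} and \eqref{i:b2} together. Suppose $a=eb$ with $e\in E$. Since $e^*=e$, the axiom $x^*y=y(xy)^*$ yields $a=eb=e^*b=b(eb)^*=ba^*$, which is of the form $bf$ with $f=a^*\in E$. Conversely, if $a=bf$ with $f\in E$, then $f^+=f$ and the axiom $xy^+=(xy)^+x$ gives $a=bf^+=(bf)^+b=a^+b$, which is of the form $eb$ with $e=a^+\in E$. This simultaneously establishes \eqref{i:b1} and both equivalences in \eqref{i:b2}.

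For \eqref{i:b3}, assume $ea=a$ with $e\in E$. Applying $^+$ and using \eqref{eq:consequences} together with the fact that $e,a^+\in E$ commute and $(ef)^+=ef$ for $e,f\in E$, I compute $a^+=(ea)^+=(ea^+)^+=ea^+$, which shows $a^+\le e$ by \eqref{i:b1}. The dual argument with $ae=a$ and $^*$ handles the other case. Part \eqref{i:b4} is then a one-line consequence: $ae=a\cdot e$ and $ea=e\cdot a$ exhibit the required form. For \eqref{i:b5}, starting from $a=ba^*$ gives $ca=cba^*$ with $a^*\in E$, so $ca\le cb$ by \eqref{i:b1}; symmetrically $a=a^+b$ gives $ac=a^+bc\le bc$.

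Finally for \eqref{i:b6}, using the characterization $a=ba^*$ from \eqref{i:b2} and the axiom $(xy^*)^*=x^*y^*$ I get $a^*=(ba^*)^*=b^*a^*$, which says $a^*\le b^*$ in the semilattice order, hence in the natural partial order (since $a^*=b^*a^*$ has the form $eb^*$ with $e=a^*\in E$). Dually, $a=a^+b$ combined with $(x^+y)^+=x^+y^+$ yields $a^+=(a^+b)^+=a^+b^+$, giving $a^+\le b^+$. The main (mild) obstacle throughout is to keep track of which axiom converts a product with a projection on the left into one with a possibly different projection on the right, so I would fix the translation lemma from the first paragraph and cite it uniformly.
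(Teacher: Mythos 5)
Your proof is correct. Note that the paper itself states this lemma without proof, treating its contents as standard facts about restriction semigroups; your derivation --- using the two ``translation'' axioms $x^*y=y(xy)^*$ and $xy^+=(xy)^+x$ to pass between the forms $a=eb$, $a=ba^*$ and $a=a^+b$, and then reading off parts (3)--(6) from the axioms and the derived identities $(xy)^*=(x^*y)^*$, $(xy)^+=(xy^+)^+$ --- is exactly the standard argument the paper implicitly relies on, and every step checks out.
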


Let $\sigma$ denote the least congruence on a restriction semigroup $S$, which identifies all elements of $P(S)$. It is well known that the following statements are equivalent: (i) $a\mathrel{\sigma} b$;  (ii) there is $e\in E$ such that $ea=eb$; (iii) there is $e\in E$ such that $ae=be$.

A restriction semigroup $S$ is called {\em proper} if the following two conditions hold:
\begin{align*}
& \text{for any } a,b\in S: \text{ if } a^*=b^*  \text{ and } a\mathrel{\sigma} b \text{ then } a=b,\\
& \text{for any } a,b\in S: \text{ if } a^+=b^+  \text{ and }  a\mathrel{\sigma} b \text{ then } a=b.
\end{align*}

A $(2,1,1)$-morphism $\varphi: T\to S$ of restriction semigroups is called {\em projection separating} if $\varphi(e)\neq \varphi(f)$ for any $e,f\in P(T)$ such that $e\neq f$.  A restriction semigroup $T$ is called a {\em cover} of a restriction semigroup $S$ if there is a surjective projection separating $(2,1,1)$-morphism $\varphi: T\to S$.

\subsection{Actions} Let $T$ be a monoid with identity $1$ and $X$ be a set. A {\em left action} of $T$ on $X$ is a map $T\times X\to X$, $(t,x)\mapsto t*x$, such that 
 $1*x=x$ for all $x\in X$ and 
 $s*(t*x) = (st)*x$ for all $s,t\in T$ and $x\in X$.
A {\em right action} of $T$ on $X$  is defined dually. 
A left action of $T$ on $X$ can be equivalently given by a monoid homomorphism $\varphi: t\mapsto \varphi_t$, $\varphi_t(x)=t*x$,  from $T$ to the full transformation monoid ${\mathcal T}(X)$. A right action can be given by an anti-homomorphism in a similar way.
 
A map $\pi:X\to Y$ between posets is called {\em order-preserving} if $x\leq y$ implies $\pi(x)\leq \pi(y)$ for all $x,y\in X$. It is called an {\em order-embedding} if $x\leq y$ holds if and only if $\pi(x)\leq \pi(y)$ for all $x,y\in X$.
An order-embedding is necessarily an injective map. A bijective order-embedding is called an {\em order-isomorphism}.
If $X=Y$,  order-isomorphisms are called {\em order-automorphisms.}
A left action of $T$ on a poset $X$, given by a homomorphism $\varphi:T\to {\mathcal T}(X)$, is called {\em order-preserving} (an action {\em  by order-embeddings} or {\em  by order-automorphisms}) if for each $t\in T$ the transformation $\varphi_t$ is order-preserving (resp. an order-embedding or an order-automorphism). 

Let  $\cdot$ and $*$ be left actions of monoids $T$ and $T'$ on posets $X$ and $X'$, respectively. These actions are called {\em isomorphic} if there are a monoid isomorphism $\alpha:T\to T'$ and an order-isomorphism $\tau:X\to X'$ such that $\alpha(t)*(\tau(x))=\tau(t\cdot x)$ for all $t\in T$ and $x\in X$. 

These concepts can be readily adapted to right actions.

\subsection{$W$-products and semidirect products} We now recall the (left hand version of the) construction of a $W$-product of a semilattice by a monoid, which is a generalization of the construction of a semidirect product of a semilattice by a group and shares several of its important properties \cite{GomesS, S1,S}.

Let $T$ be a monoid and $*$ be a left action of $T$ on a semilattice $Y$ by order embeddings. 
Assume that the ranges of actions of the elements of $T$ are order ideals of $Y$, that is, if $x\leq t*y$ for $t\in T$ and $x,y\in Y$, then there is $z\in Y$ such that $x=t*z$.
We set
$$
W(T,Y)=\{(t*y,t)\in Y\times T\colon y\in Y, t\in T\}
$$
and define the multiplication and the unary operations $^*$ and $^+$ on $W(T,Y)$ by
\begin{align*}
& (t*y,t)(s*x,s)=(t*y\wedge (ts)*x,ts),\\
& (t*y,t)^*=(y,1); \,\, (t*y,t)^+=(t*y,1).
\end{align*}

Then $W(T,Y)$ is a proper restriction semigroup and its semilattice of projections is 
$$\overline{Y}=\{(y,1)\colon y\in Y\},
$$
which is isomorphic to $Y$. Furthermore, in $W(T,Y)$ we have $(t*y,t)\mathrel{\sigma} (s*x,s)$ if and only if $s=t$, and $W(T,Y)/\sigma \simeq T$ via the map $\sigma(t*y,t)\mapsto t$, where $\sigma(t*y,t)$ is the $\sigma$-class of $(t*y,t)$.
The semigroup $W(T,Y)$ is a monoid if and only if $Y$ has an identity.

Remark that in the literature (e.g. in \cite{GomesS, S1,S}) a $W$-product $W(T,Y)$ is usually defined starting from an action of $T$ on $Y$ by injective endomorphisms such that the range of action of each element of $T$ is an order ideal of $Y$,
where an {\em endomorphism} of $Y$ is a transformation preserving the operation $\wedge$. An endomorphism of a semilattice is obviously order-preserving. Conversely, if $\alpha$ is an order-embedding of a semilattice $Y$ and its range is an order ideal in $Y$ then $\alpha$ is easily seen to respect $\wedge$, i.e., to be an endomorphism.
Thus our definition of a $W$-product is equivalent to the usual one.

The semigroup $W(T,Y)$ is called a {\em semidirect product} of $Y$ by $T$ and is denoted by $T\ltimes Y$ provided that the action $*$ is by order automorphisms. Clearly, in this case $W(T,Y)$, as a set, equals just $Y\times T$. 

\subsection{Free restriction monoids and semigroups}\label{sub:free} Free restriction monoids and semigroups will appear throughout the paper  as  examples illustrating our constructions. We briefly recall their structure, and refer the reader to \cite{FGG,S,S1} for further details.  Let $A$ be a set, $A^*$ the free monoid over $A$ and let $1$ be the empty word. By $F\mathcal{G}(A)$ we denote the free group over $A$. The elements of $F\mathcal{G}(A)$ are reduced words over $A\cup A^{-1}$. If $u,v\in F\mathcal{G}(A)$, their product $\mathrm{red}(uv)$ is the reduced word equivalent to the word $uv$ obtained by the concatenation of $u$ and $v$. The set $F\mathcal{G}(A)$ is partially ordered by the {\em prefix order} $\leq_p$. Let ${\mathcal Y}'$ be the set of all finite order ideals of $(F\mathcal{G}(A),\leq_p)$, and let  ${\mathcal Y}={\mathcal Y}'\setminus \{1\}$.
The group $F\mathcal{G}(A)$ acts  on the left of its powerset by 
$$
v*S = \{{\mathrm{red}}(vs)\colon s\in S\}.
$$
Let 
$$
\begin{array}{ll}
{\mathcal{X}}' =F\mathcal{G}(A)*{\mathcal{Y}}', \quad & {\mathcal{X}} =F\mathcal{G}(A)*{\mathcal{Y}},\\
{\mathcal{Q}}'  =A^**{\mathcal{Y}}',  \quad & {\mathcal{Q}} =A^**{\mathcal{Y}}.
\end{array}
$$
With respect to the reverse inclusion order  ${\mathcal{X}}'$, ${\mathcal{X}}$, ${\mathcal{Q}}'$ and  ${\mathcal{Q}}$ are semilattices.  Clearly,  ${\mathcal{X}}'$ and ${\mathcal{X}}$ are invariant under the action of $F\mathcal{G}(A)$ and ${\mathcal Q}'$, ${\mathcal Q}$ are invariant under the action of  $A^*$. 
There is also a right action $\bullet$ of $F\mathcal{G}(A)$ on its powerset given by 
$S\bullet v=v^{-1}*S$ and both ${\mathcal{X}}'$ and ${\mathcal{X}}$ are invariant under this action. However,  neither ${\mathcal Q}'$ nor ${\mathcal Q}$ is invariant under $\bullet$. The left action $*$ of $A^*$ on ${\mathcal{Q}}'$  satisfies the requirements for forming of the $W$-product $W(A^*, {\mathcal{Q}}')$. 
The set 
$$
F_W{\mathcal{RM}}(A)=\{(t*y,t)\in W(A^*, {\mathcal{Q}}')\colon y\in {\mathcal{Y}}' \text{ and } t*y \in {\mathcal{Y}}'\}
$$
forms  a $(2,1,1)$-subalgebra of $W(A^*,{\mathcal{Q}}')$ which is isomorphic to the free restriction monoid $F{\mathcal{RM}}(A)$ over $A$. We refer to it as the {\em Szendrei's model} of the free restriction monoid. Note that 
$$P(F_W{\mathcal{RM}}(A))=\{(y,1) \colon y\in {\mathcal{Y}}'\}
$$
is a semilattice isomorphic to ${\mathcal{Y}}'$ via the map $(y,1)\mapsto y$; $(t*y,t)\mathrel{\sigma} (s*x,s)$ if and only if $t=s$ and $W(A^*,{\mathcal{Q}}')/\sigma \simeq A^*$ via the map $\sigma(t*y,t)\mapsto t$.

Similarly, we can form the $W$-product $W(A^*, {\mathcal{Q}})$ and construct the Szendrei's model of the free restriction semigroup $F{\mathcal{RS}}(A)$ over $A$ as its $(2,1,1)$-subalgebra  $F_W{\mathcal{RS}}(A)$ by replacing ${\mathcal{Q}}'$ by ${\mathcal{Q}}$ and ${\mathcal{Y}}'$ by ${\mathcal{Y}}$ in the definition of $F_W{\mathcal{RM}}(A)$.


\section{Classes of proper restriction semigroups}\label{s:classes}
\subsection{Partial actions and the Cornock-Gould structure theorem}\label{s:cg}
Let $T$ be a monoid with the identity element $1$ and let $X$ be a set.
A {\em left partial action} of $T$ on $X$ is a partial map $T\times X\to X$, $(t,x)\mapsto t\cdot x$, such that 
\begin{enumerate}[(LP1)]
\item For all $x\in X$: $1\cdot x$ is defined and $1\cdot x=x$.
\vspace{0.1cm}
\item For all $s,t\in T$ and $x\in X$: if $t\cdot x$ is defined and $s\cdot (t\cdot x)$ is defined then $(st)\cdot x$ is defined and  $s\cdot (t\cdot x) = (st) \cdot x$. 
\end{enumerate}
A {\em right partial action} of $T$ on $X$ is defined dually.   

A left partial action of a monoid $T$ on a set $X$ can be looked at as a  dual prehomomorphism from $T$ to the partial transformation monoid ${\mathcal{PT}}(X)$.  Recall that a map $\varphi\colon  T\to {\mathcal{PT}}(X)$, $t\mapsto \varphi_t$, is called  a {\em dual prehomomorphism} if $\varphi_1$ is the identity transformation and $\varphi_{st}$ is an extension of $\varphi_s\varphi_t$, that is,  $x\in{\mathrm{dom}}(\varphi_s\varphi_t)$ implies $x\in{\mathrm{dom}}(\varphi_{st})$ and $\varphi_s\varphi_t(x)=\varphi_{st}(x)$. {\em Order-preserving} partial actions, partial actions by {\em order-isomorphisms} and {\em isomorphic partial actions} on posets are defined similarly to the analogous notions for actions.

Let now $Y$ be a semilattice and $T$ a monoid acting on $Y$ partially on the left via $\cdot$, and let $\varphi:T\to \mathcal{PT}(Y)$ be the corresponding dual prehomomorphism. Assume that for every $t\in T$ the map $\varphi_t$  satisfies the following axioms: 
\begin{enumerate}
\item[(A)] ${\mathrm{dom}}(\varphi_t)$ and  ${\mathrm{ran}}(\varphi_t)$ are order ideals of $Y$.
\vspace{0.1cm}
\item[(B)] $\varphi_t:{\mathrm{dom}}(\varphi_t)\to {\mathrm{ran}}(\varphi_t)$ is an order-isomorphism.
\vspace{0.1cm}
\item[(C)] ${\mathrm{dom}}(\varphi_t)\neq \varnothing$.
\end{enumerate}


\begin{lemma} The assignment $t\to \varphi_t^{-1}$ defines a right partial  action, $\circ$, of $T$ on $Y$ and its defining dual antiprehomomorphism $\psi$, given by $\psi_t=\varphi_t^{-1}$, satisfies  axioms (A), (B) and~(C). 
\end{lemma}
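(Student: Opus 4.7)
The plan is to unpack what it means for $\psi_t = \varphi_t^{-1}$ to define a right partial action, and verify each clause directly from the hypotheses. Axiom (B) is what makes the inversion meaningful: since $\varphi_t$ is an order-isomorphism from ${\mathrm{dom}}(\varphi_t)$ onto ${\mathrm{ran}}(\varphi_t)$, it is in particular a bijection between these subsets, so $\psi_t := \varphi_t^{-1}$ is a well-defined partial map on $Y$ with ${\mathrm{dom}}(\psi_t) = {\mathrm{ran}}(\varphi_t)$ and ${\mathrm{ran}}(\psi_t) = {\mathrm{dom}}(\varphi_t)$. I set $y\circ t := \psi_t(y)$ whenever the right side is defined.

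The identity clause $y\circ 1 = y$ is immediate, since $\varphi_1 = {\mathrm{id}}_Y$ (because $\varphi$ is a dual prehomomorphism) forces $\psi_1 = {\mathrm{id}}_Y$. The heart of the argument is the dual antiprehomomorphism property, equivalently condition (RP2): if $y\circ s$ and $(y\circ s)\circ t$ are both defined, then $y\circ(st)$ is defined and equals $(y\circ s)\circ t$. Writing $z = \psi_t\psi_s(y) = \varphi_t^{-1}(\varphi_s^{-1}(y))$, one checks directly that $\varphi_t(z) = \varphi_s^{-1}(y) \in {\mathrm{dom}}(\varphi_s)$ and then $\varphi_s(\varphi_t(z)) = y$, so $z\in{\mathrm{dom}}(\varphi_s\varphi_t)$. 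The dual prehomomorphism hypothesis on $\varphi$ then gives $z\in{\mathrm{dom}}(\varphi_{st})$ and $\varphi_{st}(z) = y$; injectivity of $\varphi_{st}$ (from axiom (B) applied to $st$) yields $\psi_{st}(y) = z$, which is exactly the required identity $(y\circ s)\circ t = y\circ(st)$.

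It remains to verify (A), (B), (C) for $\psi$, all of which are essentially free. For (A), ${\mathrm{dom}}(\psi_t) = {\mathrm{ran}}(\varphi_t)$ and ${\mathrm{ran}}(\psi_t) = {\mathrm{dom}}(\varphi_t)$ are order ideals by (A) for $\varphi$. For (B), the inverse of an order-isomorphism is again an order-isomorphism. For (C), ${\mathrm{dom}}(\psi_t) = {\mathrm{ran}}(\varphi_t)$ is nonempty because ${\mathrm{dom}}(\varphi_t)$ is nonempty by (C) for $\varphi$ and $\varphi_t$ is a bijection onto its range.

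The only step requiring genuine care is the extension property in the second paragraph, and the key subtlety there is noticing that one must apply the prehomomorphism condition in the form ``$\varphi_{st}$ extends $\varphi_s\varphi_t$'' to the element $z$ (not to $y$), and then invoke the injectivity of $\varphi_{st}$ to identify $\psi_{st}(y)$ with $z$. Once this is arranged, the rest is bookkeeping.
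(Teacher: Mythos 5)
Your proof is correct and follows essentially the same route as the paper: the paper sets $x=y\circ s$, $z=x\circ t$, observes $y=s\cdot(t\cdot z)=(st)\cdot z$ by the partial-action axiom (LP2) (which is exactly your ``apply the extension property at $z$'' step), and concludes $y\circ(st)=z$, with (A), (B), (C) for $\psi$ dismissed as straightforward. Your explicit appeal to the injectivity of $\varphi_{st}$ to identify $\psi_{st}(y)$ with $z$ is a point the paper leaves implicit, but the argument is the same.
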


\begin{proof} It is immediate that  $y\circ 1=y$ for all $y\in Y$. Assume that $y\circ s$ and $(y\circ s)\circ t$ are defined. Let $x=y\circ s$ and $z=x\circ t$. Then $y=s\cdot x$ and $x=t\cdot z$, whence $y=s\cdot (t\cdot z)= st\cdot z$ since $\cdot$ is a partial action. Thus $y\circ st$ is defined and $y\circ st=z$. Axioms (A), (B), (C) for $\psi$ are straightforward to verify.
\end{proof}

We say that the partial actions $\cdot$ and $\circ$ are {\em reverse} to each other. It is immediate that for every $t\in T$ and $y\in Y$: 
$$
\text{ if } t\cdot y \text{ is defined then }  (t\cdot y)\circ t \text{ is defined and } (t\cdot y)\circ t=y;
$$
$$
\text{ if }  y\circ t \text{ is defined then }  t\cdot (y \circ t) \text{ is defined and } t \cdot (y \circ t)=y.
$$

We remark that in \cite[Section 4]{CG} a pair of partial actions (namely, a left partial action and a right partial action) of a monoid $T$ on a semilattice $Y$ satisfying certain conditions is considered. Notice that each of these two partial actions determines the other one: the first two conditions, (A) and (B), from \cite[Section 4]{CG} say precisely that for each $t\in T$ the two maps on $Y$  induced by $t$ are mutually inverse (and consequently, each of these maps is injective). Therefore, the setting from \cite[Section 4]{CG} is equivalent to ours. It follows in particular that the partial actions $\cdot$ and $\circ$ respect the operation $\wedge$ on $Y$, as it is established in \cite[Proposition 4.1]{CG}.

Given a left partial action $\cdot$ of $T$ on $Y$ such that axioms (A), (B) and (C) hold, we set
$$
M(T,Y)=\{(y,t)\in Y\times T\colon  y\circ t \text{ is defined}\}
$$ 
and define the multiplication on $M(T,Y)$  by
$$
(x,s)(y,t)=(s\cdot ((x\circ s)\wedge y), st).
$$

With respect to this multiplication, $M(T,Y)$ is a semigroup. Furthermore, elements of the form $(y,1)$, where $y$ runs through $Y$, form a subsemilattice $\overline{Y}$ isomorphic to $Y$. It is shown in \cite{CG} that if one defines the unary operations $^*$ and $^+$  by
$$(y,t)^*=(y\circ t,1), \,\, (y,t)^+=(y,1),$$
$M(T,Y)$ becomes a proper restriction semigroup with $\overline{Y}=P(M(T,Y))$. 

Conversely, let $S$ be a proper restriction semigroup and consider a left partial action $\cdot$ of $T=S/\sigma$ on $E=P(S)$ given for  $t\in S/\sigma$ and $e\in E$ by
\begin{equation}\label{eq:und}
t\cdot e \text{ is defined if and only if there exists } a\in t \text{ such that } a^*\geq e, 
\end{equation}
 in which case  $t\cdot e = (ae)^+$. Observe that $a^*\geq e$ implies $(ae)^*=e$, so that  $a'=ae\in t$ with $(a')^*=e$. We will use this fact in the sequel without further mention.
The partial action $\cdot$ satisfies axioms (A), (B), (C) and the following structure theorem holds:

\begin{theorem}[Cornock and Gould \cite{CG}] \label{th:CG} Any  proper restriction semigroup $S$  is isomorphic to $M(S/\sigma, P(S))$.
\end{theorem}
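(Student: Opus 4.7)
The plan is to construct an explicit isomorphism $\phi \colon S \to M(S/\sigma, P(S))$ given by $\phi(a) = (a^+, \sigma(a))$, where $\sigma(a)$ denotes the $\sigma$-class of $a$. I would organize the proof around (i) well-definedness of $\phi$, (ii) that it is a $(2,1,1)$-homomorphism, (iii) injectivity, and (iv) surjectivity.

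For well-definedness I need to check that $a^+ \circ \sigma(a)$ is defined, equivalently, that $\sigma(a) \cdot e = a^+$ for some $e \in E$. Using $a$ itself as a representative of $\sigma(a)$ and noting that $a^* \geq a^*$, the rule \eqref{eq:und} yields $\sigma(a) \cdot a^* = (aa^*)^+ = a^+$, so $a^+ \circ \sigma(a) = a^*$, and the pair $(a^+, \sigma(a))$ indeed lies in $M(S/\sigma, P(S))$. Preservation of $^*$ and $^+$ is then immediate from $\phi(a)^* = (a^+ \circ \sigma(a), 1) = (a^*, 1) = \phi(a^*)$ and $\phi(a)^+ = (a^+, 1) = \phi(a^+)$, since projections are exactly the elements of the identity $\sigma$-class.

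The main computational step is showing $\phi(ab) = \phi(a)\phi(b)$. Unfolding the product in $M(S/\sigma, P(S))$ and using $a^+ \circ \sigma(a) = a^*$, one gets
\[
\phi(a)\phi(b) = \bigl(\sigma(a) \cdot (a^* \wedge b^+),\, \sigma(a)\sigma(b)\bigr) = \bigl(\sigma(a) \cdot (a^* b^+),\, \sigma(ab)\bigr).
\]
Since $a \in \sigma(a)$ and $a^* \geq a^* b^+$, definition \eqref{eq:und} gives $\sigma(a) \cdot (a^* b^+) = (a \cdot a^* b^+)^+ = (a b^+)^+$, which equals $(ab)^+$ by the second identity in \eqref{eq:consequences}. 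Thus the first coordinate of $\phi(a)\phi(b)$ coincides with $(ab)^+$, matching $\phi(ab)$. This step is where the restriction identities are doing all the work, and I expect it to be the main obstacle, mainly in keeping track of why the representative $a$ is a legal witness for the partial action at $a^* b^+$.

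Injectivity follows from the proper hypothesis: if $\phi(a) = \phi(b)$ then $a^+ = b^+$ and $a \mathrel{\sigma} b$, hence $a = b$ by the second half of the properness condition. For surjectivity, take $(y,t) \in M(S/\sigma, P(S))$ and let $z = y \circ t$, so that $t \cdot z = y$ is defined. By \eqref{eq:und} there exists $a \in t$ with $a^* \geq z$ and $(az)^+ = y$. Setting $a' = az$, the remark immediately after \eqref{eq:und} gives $(a')^* = z$, while $(a')^+ = (az)^+ = y$ and $\sigma(a') = \sigma(a) = t$, so $\phi(a') = (y,t)$. Together these four verifications establish the claimed isomorphism.
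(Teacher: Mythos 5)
Your proof is correct. Note that the paper itself offers no proof of this theorem: it is quoted from Cornock and Gould \cite{CG}, so there is nothing internal to compare against. Your verification via $a\mapsto (a^+,\sigma(a))$ is the standard argument, and you handle the one genuinely delicate point correctly, namely that $a$ itself witnesses definedness of $\sigma(a)\cdot(a^*b^+)$ because $a^*\geq a^*b^+$, so that $\sigma(a)\cdot(a^*b^+)=(aa^*b^+)^+=(ab^+)^+=(ab)^+$; injectivity from properness and surjectivity via $a'=az$ with $z=y\circ t$ are likewise as one would expect.
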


We refer to the left partial action $\cdot$  given by \eqref{eq:und} and its reverse right partial action $\circ$ as the partial actions  {\em underlying}~$S$.

\subsection{Partial actions and $W$-products}\label{sub:w} Let $W(T,Y)$ be a $W$-product and  $*$ the left action of $T$ on $Y$ defining it. It is immediate that axioms (A), (B) and (C) are satisfied by $*$ so that the semigroup $M(T,Y)$ may be formed. By construction, the sets $M(T,Y)$ and $W(T,Y)$ coincide and so do the unary operations $^*$ and $^+$ on these sets. A direct verification (or an application of  Lemma~\ref{lem:products}) shows that the products on $M(T,Y)$ and $W(T,Y)$ coincide, too. It follows that $M(T,Y)=W(T,Y)$ as $(2,1,1)$-algebras. If $W(T,Y)=T\ltimes Y$, that is, if 
$*$ is an action by automorphisms, the underlying left and right partial actions of $T\ltimes Y$  are actions.

Conversely, let $*$ be a left action of $T$ on $Y$ which satisfies axioms (A), (B) and (C). Then $*$ satisfies the requirements needed to form the $W$-product $W(T,Y)$. If $*$ acts by automorphisms we have $W(T,Y)=T\ltimes Y$. We obtain the following statement.

\begin{proposition}\label{prop:w_prod} 
$W$-products of semilattices by monoids (defined by left actions) are precisely the proper restriction semigroups whose underlying left partial action is an action. In particular, semidirect products of semilattices by monoids are precisely the proper restriction semigroups whose underlying left partial action is an action by automorphisms (or, equivalently, both of whose underlying left and right partial actions are actions).
\end{proposition}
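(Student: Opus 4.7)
My plan is to establish the two characterizations by chasing through the identifications already set up in subsection 2.2, so the proof becomes essentially a bookkeeping exercise built around the key observation that $M(T,Y) = W(T,Y)$ as $(2,1,1)$-algebras whenever a total action $*$ satisfies axioms (A), (B), (C).

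For the forward direction of the first claim, I would start with a $W$-product $W(T,Y)$ defined by a left action $*$. Since $*$ is total, axioms (A) and (C) hold trivially, and (B) holds because $*$ acts by order-embeddings with order-ideal ranges (which the paper just noted is equivalent to acting by injective endomorphisms). Thus $M(T,Y)$ is defined, and by the observation already established in the text it coincides with $W(T,Y)$ as a $(2,1,1)$-algebra. The one genuine verification I would carry out is that the underlying left partial action of $W(T,Y)$, computed via \eqref{eq:und}, recovers $*$. Concretely, for $t\in T/\sigma \cong T$ (using $\sigma(t*y,t)\mapsto t$) and a projection $(e,1)$, one picks the representative $a=(t*e,t)$, checks $a^* = (e,1)$, and computes $(a(e,1))^+ = (t*e,1)$, which is exactly $t*e$ read through the isomorphism $\overline{Y}\cong Y$. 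In particular this underlying partial action is total.

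For the converse, I would take a proper restriction semigroup $S$ whose underlying left partial action $\cdot$ is an action (i.e.\ total). By Theorem~\ref{th:CG} we have $S\cong M(S/\sigma, P(S))$. Since $\cdot$ satisfies (A), (B), (C) \emph{and} is total, it meets exactly the hypotheses for forming the $W$-product; applying the forward identification backward yields $M(S/\sigma, P(S)) = W(S/\sigma, P(S))$ as $(2,1,1)$-algebras, so $S$ is a $W$-product. Together with the forward direction this proves the first statement.

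For the addendum on semidirect products it remains only to identify when $*$ acts by order-automorphisms, because $T\ltimes Y$ is by definition $W(T,Y)$ in that case. Using the first part, I reduce to showing that the underlying left partial action $\cdot$ is an action by order-automorphisms if and only if both $\cdot$ and its reverse $\circ$ are total. One direction is immediate: if each $\varphi_t$ is an order-automorphism of $Y$, then $\psi_t = \varphi_t^{-1}$ is defined everywhere, so $\circ$ is total. Conversely, totality of $\circ$ means $\mathrm{dom}(\psi_t) = \mathrm{ran}(\varphi_t) = Y$, and totality of $\cdot$ gives $\mathrm{dom}(\varphi_t)=Y$, so each $\varphi_t$ is a surjective order-embedding of $Y$, i.e.\ an order-automorphism. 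I do not anticipate any real obstacle; the only place requiring care is the explicit recovery of $*$ from \eqref{eq:und} inside $W(T,Y)$, where one must pick the right representative of the $\sigma$-class so that the formula $(ae)^+$ simplifies correctly.
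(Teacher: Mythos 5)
Your proposal is correct and follows essentially the same route as the paper: verify that a total action satisfying the $W$-product hypotheses also satisfies (A), (B), (C), identify $M(T,Y)$ with $W(T,Y)$ as $(2,1,1)$-algebras, invoke Theorem~\ref{th:CG} for the converse, and note that acting by order-automorphisms is equivalent to both $\cdot$ and $\circ$ being total. Your explicit check via \eqref{eq:und} that the underlying partial action of $W(T,Y)$ recovers $*$ is a detail the paper leaves implicit, but it is a welcome rather than divergent addition.
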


\subsection{$F$-restriction monoids}
By analogy with inverse semigroups, we call a restriction semigroup an $F$-{\em restriction semigroup} if every $\sigma$-class has a maximum element.

\begin{lemma}\label{lem:f_rest} Let $S$ be an $F$-restriction semigroup. Then $S$ is proper.  Consequently, $S$ is necessarily a monoid with the identity being the maximum projection.
\end{lemma}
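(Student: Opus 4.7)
The plan has two parts, corresponding to the two assertions.

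First I would prove properness. Take $a,b\in S$ with $a^{*}=b^{*}$ and $a\mathrel{\sigma}b$, and let $m$ be the maximum element of their common $\sigma$-class. Then $m\geq a$ and $m\geq b$, so Lemma~\ref{lem:lem1}\eqref{i:b2} gives $a=ma^{*}$ and $b=mb^{*}$. Since $a^{*}=b^{*}$, these are equal, so $a=b$. The dual condition follows in exactly the same way using $a=a^{+}m$ and $b=b^{+}m$, again by Lemma~\ref{lem:lem1}\eqref{i:b2}. Hence $S$ is proper.

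Next I would establish that $S$ is a monoid. Since $\sigma$ identifies all projections, $P(S)$ is contained in a single $\sigma$-class; let $m$ be the maximum element of this class. Because $m^{*}\in P(S)$ lies in the same $\sigma$-class as $m$, we have $m\mathrel{\sigma}m^{*}$. On the other hand $(m^{*})^{*}=m^{*}$, so $m$ and $m^{*}$ share the same value of the unary operation~$^{*}$. Applying the properness just proved, $m=m^{*}$, so $m$ is itself a projection, namely the largest one in $P(S)$.

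Finally I would show that $m$ acts as a two-sided identity. For any $s\in S$ the projection $s^{+}$ satisfies $s^{+}\leq m$, so using that projections commute and $m=m^{*}$ together with Lemma~\ref{lem:lem1}\eqref{i:b2}, $s^{+}=s^{+}m=ms^{+}$. Therefore
\[
ms=m(s^{+}s)=(ms^{+})s=s^{+}s=s.
\]
Symmetrically, $s^{*}\leq m$ yields $sm=s$. Thus $m$ is the identity of $S$, and it is the maximum projection by construction.

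The only mildly delicate point is the monoid part, where one must verify that the maximum of the $\sigma$-class of projections is itself a projection; everything else is a direct application of Lemma~\ref{lem:lem1}. This verification is precisely where the just-proved properness of $S$ is used, so the order of the two parts of the argument is essential.
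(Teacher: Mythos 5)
Your properness argument is essentially the paper's: both take the maximum element $m$ of the common $\sigma$-class and apply Lemma~\ref{lem:lem1}\eqref{i:b2} to write $a=ma^{*}=mb^{*}=b$ (respectively $a=a^{+}m=b^{+}m=b$). The paper states the ``consequently'' clause without proof; your explicit verification --- that the maximum $m$ of the $\sigma$-class containing $P(S)$ satisfies $m=m^{*}$ by the properness just established and then acts as a two-sided identity via $s^{+}\leq m$ and $s^{*}\leq m$ --- is correct and supplies the omitted details.
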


\begin{proof} For $a\in S$ let $m(a)$ be the maximum element in the $\sigma$-class of $a$. Assume that $a\mathrel{\sigma} b$ and $a^+=b^+$. Then $m(a)=m(b)$ and since $a\leq m(a)$, $b\leq m(b)$ we obtain that
$a=a^+m(a)=b^+m(b)=b$. Similarly, one can see that $a\mathrel{\sigma} b$ and $a^*=b^*$ imply $a=b$.
\end{proof}

Recall that the {\em Munn semigroup} $T_Y$ of a semilattice $Y$ is the semigroup of all order-isomorphism between principal order ideals of $Y$ under composition. This is an inverse semigroup contained in ${\mathcal I}(Y)$.

\begin{lemma}\label{lem:f_rest1} Let $T$ be a monoid acting partially on the left of a semilattice $Y$  so that $M(T,Y)$ can be formed and let $\varphi:T\to {\mathcal{I}}(Y)$ be the corresponding dual prehomomorphism. The following statements are equivalent: 
\begin{enumerate}
\item $M(T,Y)$ is an $F$-restriction monoid.
\item ${\mathrm{dom}}(\varphi_t)$ is a principal order ideal for every $t\in  T$.
\item The image of $\varphi$ is contained in the Munn semigroup $T_Y$ of $Y$. 
\end{enumerate}
\end{lemma}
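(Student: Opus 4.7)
The plan is to analyse the $\sigma$-classes of $M(T,Y)$ together with the natural partial order restricted to such a class, and then read off each implication directly.

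First I would identify the $\sigma$-classes. Using the description of projections $\overline{Y}=\{(e,1):e\in Y\}$ and the multiplication formula, for any projection $(e,1)$ and any $(y,t)\in M(T,Y)$ one computes $(e,1)(y,t)=(e\wedge y,t)$. From this, $(y_1,t_1)\mathrel{\sigma}(y_2,t_2)$ forces $t_1=t_2$, and conversely for $t_1=t_2=t$ the projection $(y_1\wedge y_2,1)$ witnesses $\sigma$-relatedness. Thus the $\sigma$-class of $(y,t)$ equals
\[
[t] = \{(y',t): y'\circ t \text{ is defined}\} = \{(y',t): y'\in\mathrm{ran}(\varphi_t)\},
\]
and each $t\in T$ contributes a non-empty class by axiom (C).

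Next I would determine the natural partial order on a single $\sigma$-class. Since $a\leq b$ iff $a=eb$ for some projection $e$, one has $(y_1,t)\leq (y_2,t)$ iff $y_1=e\wedge y_2$ for some $e\in Y$, which is just $y_1\leq y_2$. Consequently the map $(y,t)\mapsto y$ is an order-isomorphism from $[t]$ onto the order ideal $\mathrm{ran}(\varphi_t)$ of $Y$.

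For (1)$\Leftrightarrow$(2) I observe that an order ideal in a semilattice has a maximum element if and only if it is principal. Hence $[t]$ has a maximum if and only if $\mathrm{ran}(\varphi_t)$ is a principal order ideal. Since $\varphi_t$ is an order-isomorphism between $\mathrm{dom}(\varphi_t)$ and $\mathrm{ran}(\varphi_t)$ (axiom (B)), one is principal if and only if the other is. Therefore $M(T,Y)$ is $F$-restriction precisely when $\mathrm{dom}(\varphi_t)$ is a principal order ideal for every $t$. (The fact that $M(T,Y)$ is then a monoid is automatic by Lemma~\ref{lem:f_rest} and is also visible directly: taking $t=1$ forces $Y=\mathrm{dom}(\varphi_1)$ to be principal, so $Y$ has a top.)

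Finally (2)$\Leftrightarrow$(3) is essentially the definition of the Munn semigroup: $\varphi_t\in T_Y$ says precisely that $\varphi_t$ is an order-isomorphism between principal order ideals, and by axioms (A), (B) together with the domain-range duality noted above, this reduces to $\mathrm{dom}(\varphi_t)$ being principal. There is no real obstacle here; the only point that requires care is verifying that $\sigma$ on $M(T,Y)$ has exactly the classes $[t]$ claimed, which is handled by the short computation in the first step.
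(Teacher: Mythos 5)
Your proposal is correct and follows essentially the same route as the paper: both arguments identify the $\sigma$-class of $(y,t)$ with $\mathrm{ran}(\varphi_t)$ ordered as in $Y$, observe that an order ideal has a maximum iff it is principal, and transfer principality between domain and range via axiom (B), with (2)$\Leftrightarrow$(3) being definitional. The only difference is that you explicitly verify the description of $\sigma$ and of the natural partial order on a $\sigma$-class, which the paper simply cites as known properties of $M(T,Y)$.
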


\begin{proof} (1) $\Rightarrow$ (2)   Since $(x,s)\mathrel{\sigma} (y,t)$ holds if and only if $s=t$ and $(x,s)\geq (y,s)$ holds if and only if $x\geq y$, we conclude that for every $t\in T$ there is a maximum element $y\in Y$ such that $(y,t)\in M(T,Y)$. That is, $y$ is the maximum element for which $y\circ t$ is defined or, equivalently, $y$ is the maximum element of ${\mathrm{ran}}(\varphi_t)$. Hence, $y\circ t$ is the maximum element of ${\mathrm{dom}}(\varphi_t)$. The implication (2) $\Rightarrow$ (1) is proved by reversing the arguments. 

The equivalence (2) $\Leftrightarrow$ (3) is immediate.
\end{proof}

\begin{corollary} A $W$-product $W(T,Y)$ is $F$-restriction if and only if $Y$ has an identity if and only if $W(T,Y)$ is a monoid.
\end{corollary}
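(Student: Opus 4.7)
The plan is to chain the three conditions via the lemmas just proved, exploiting the fact that a $W$-product is exactly the case where the underlying partial action is total.

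First I would handle the equivalence of ``$Y$ has an identity'' with ``$W(T,Y)$ is a monoid''. One direction is noted in Subsection~2.3: if $Y$ has an identity $1_Y$, then $(1_Y,1)$ is easily seen to act as a two-sided identity in $W(T,Y)$. For the converse, suppose $W(T,Y)$ has identity $e$. Since any identity of a restriction monoid coincides with the maximum projection, $e\in P(W(T,Y))=\overline{Y}$, so $e=(y_0,1)$ for some $y_0\in Y$, and then $y_0$ must be a maximum (hence identity) element of $Y$.

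Next I would prove the equivalence with the $F$-restriction condition. By Proposition~\ref{prop:w_prod}, $W(T,Y)$ arises as $M(T,Y)$ for a total left action $\ast$ of $T$ on $Y$, so the dual prehomomorphism $\varphi:T\to {\mathcal{PT}}(Y)$ has $\mathrm{dom}(\varphi_t)=Y$ for every $t\in T$. Applying Lemma~\ref{lem:f_rest1}, $W(T,Y)$ is $F$-restriction if and only if each $\mathrm{dom}(\varphi_t)=Y$ is a principal order ideal of $Y$, which is equivalent to $Y$ itself possessing a maximum element, i.e.\ an identity. Finally, I would close the triangle by noting that an $F$-restriction semigroup is automatically a monoid (Lemma~\ref{lem:f_rest}), so ``$F$-restriction'' $\Rightarrow$ ``monoid'' with no extra work, while the implication from ``monoid'' back to ``$F$-restriction'' follows from the previous step via the identity of $Y$.

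There is no real obstacle here: once Proposition~\ref{prop:w_prod} is used to reduce a $W$-product to the case of a total action, the statement of Lemma~\ref{lem:f_rest1}(2) collapses to the assertion that $Y$ is principal, which is exactly the existence of an identity. The only thing to double-check is that, in the $W$-product setting, the maximum element of $Y$ is indeed the identity of the underlying semilattice, but this is immediate since a maximum element of a meet-semilattice is a $\wedge$-identity.
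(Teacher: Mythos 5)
Your proof is correct and follows essentially the route the paper intends: the corollary is left without an explicit proof there, being an immediate consequence of Lemma~\ref{lem:f_rest1} applied to the total action defining $W(T,Y)$ (so that $\mathrm{dom}(\varphi_t)=Y$ is principal iff $Y$ has a maximum, i.e.\ a $\wedge$-identity), together with the remark in Subsection~2.3 that $W(T,Y)$ is a monoid iff $Y$ has an identity and Lemma~\ref{lem:f_rest} for the remaining implication. All steps check out.
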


\subsection{Strong partial actions} 
A left partial action $\cdot$ of a monoid $T$ on a set $Y$ is called {\em strong} if the following requirement holds.
\begin{itemize}
\vspace{0.1cm}
\item[(S)]  For allÊ $s,t\in T$  and  $y\in Y$:  if  $t\cdot  y$  and  $(st) \cdot y$  are defined then 
$s \cdot (t\cdot y)$  is defined.
\vspace{0.1cm}
\end{itemize}
If the above condition is met, we have $s \cdot (t\cdot y)=(st)\cdot y$.
A {\em strong right partial action} is defined dually. 
Strong partial actions of monoids were first considered in~\cite{MS}, where Condition~(S) is a part of the definition of a partial action and the term strong is not used. They were then studied in \cite{GH,H}.
If the monoid $T$ is a group, its left partial action, as defined by (LP1) and (LP2), is a wider notion than the usual partial action of a group \cite{E, KL}, since the latter has to satisfy an additional requirement: for any $g\in G$ and $x\in X$ if $g\cdot x$ is defined, then also $g^{-1}\cdot (g\cdot x)$ is defined and $g^{-1}\cdot (g\cdot x)=x$. A  left partial  action of a group, as defined by (LP1) and (LP2), is strong if and only if it is a usual left partial action. 

A {\em globalization} of a left partial action $\cdot$  of  a monoid $T$ on a set $Y$ consists of (i) a left partial action $\hat{\cdot}$ of $T$ on a set $\overline{Y}$ such that $\hat{\cdot}$ is isomorphic to $\cdot$ and (ii) a left action $*$ of $T$ on a superset $X\supseteq \overline{Y}$, such that for every $y\in \overline{Y}$: $t\,\hat{\cdot}\, y= t*y$ whenever $t\, \hat{\cdot}\, y$ is defined. 
Strong left partial actions are precisely the left partial actions that can be globalized (easy to verify or see \cite{H}). 
A similar definition and remark apply to right partial actions.

A dual concept to strongness, antistrongness, arises for partial actions by injective maps. Namely, if  $\cdot$ is such a left partial action and $\varphi:T\to {\mathcal I}(Y)$, $t\mapsto \varphi_t$, is the corresponding dual prehomomorphism  then the assignment $t\mapsto \varphi_t^{-1}$ defines a dual antiprehomomorphism and thus a right partial action, $\circ$. It is natural to call $\cdot$ {\em antistrong} if $\circ$ is strong.
If $T$ is a group then its (usual) left partial action is strong if and only if it is antistrong, but this is not the case for monoid actions in general. 

\subsection{Strong partial actions and a  condition from \cite{CG}}
Let $S$ be a proper restriction semigroup. We argue that strongness of the underlying partial actions $\cdot$ and $\circ$ of $S/\sigma$ on $E=P(S)$ is equivalent to the following condition ${\mathrm{(EP)^r}}$ and its dual condition ${\mathrm{(EP)^l}}$ arising in  \cite{CG}. 
\begin{enumerate}
\vspace{0.1cm}
\item[${\mathrm{(EP)^r}}$] For all $s,t,u\in S$: if $s\mathrel{\sigma} tu$ then there exists $v\in S$ with $t^+s=tv$ and $u\mathrel{\sigma} v$.
\vspace{0.1cm}
\end{enumerate}

\begin{proposition}\label{prop:char} Let $S$ be a proper restriction semigroup.  Then 
\begin{enumerate}
\item $S$ satisfies condition ${\mathrm{(EP)^r}}$ if and only if $\circ$ is strong. 
\item $S$ satisfies condition ${\mathrm{(EP)^l}}$ if and only if $\cdot$ is strong. 
\end{enumerate} 
\end{proposition}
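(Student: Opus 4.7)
The plan is to translate both conditions via the Cornock--Gould isomorphism $S\simeq M(T,Y)$ (with $T=S/\sigma$, $Y=P(S)$), exploiting the elementary observation that properness makes the assignments $b\mapsto b^+$ and $b\mapsto b^*$ bijections from each $\sigma$-class $t$ onto $\mathrm{ran}(\varphi_t)$ and $\mathrm{dom}(\varphi_t)$ respectively. I will carry out part~(1) in detail; part~(2) is strictly dual.

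For the forward direction of~(1), suppose $\circ$ is strong and $s\mathrel\sigma tu$. I would set $y=t^+s^+$ and $\bar t=\sigma(t)$, $\bar u=\sigma(u)$. Both $s^+t\in\bar t$ and $t^+s\in\bar t\bar u$ have $^+$-projection equal to $y$ (computed via $(ex)^+=ex^+$ for $e\in E$), so $y\circ\bar t$ and $y\circ(\bar t\bar u)$ are defined. Strongness of $\circ$ then yields $c\in\bar u$ with $c^+=(s^+t)^*$. The product $(s^+t)c$ lies in $\bar t\bar u=\sigma(s)$ and has $^+$-projection $y$, exactly as $t^+s$; properness forces $(s^+t)c=t^+s$. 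Using the consequence $ex=x(ex)^*$ of the axiom $x^*y=y(xy)^*$, a short calculation gives $(s^+t)c=tc$, so that $v:=c$ satisfies $tv=t^+s$ and $v\mathrel\sigma u$.

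For the converse, assume ${\mathrm{(EP)^r}}$ and that $y\circ\bar s$ and $y\circ(\bar s\bar t)$ are defined, with witnesses $b\in\bar s$ and $a\in\bar s\bar t$ satisfying $b^+=a^+=y$. Choose any $d\in\bar t$; then $a\mathrel\sigma bd$, and ${\mathrm{(EP)^r}}$ yields $v\mathrel\sigma d$ with $b^+a=bv$, whence $bv=a$ because $b^+=a^+$. Applying the axiom $xy^+=(xy)^+x$ to $b,v$ gives $bv^+=(bv)^+b=a^+b=b^+b=b$, so Lemma~\ref{lem:lem1}\,(\ref{i:b3}) yields $v^+\geq b^*$. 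Then $v':=b^*v$ lies in $\bar t$ and satisfies $(v')^+=b^*v^+=b^*$, exhibiting $b^*=y\circ\bar s$ as an element of $\mathrm{ran}(\varphi_{\bar t})$; hence $(y\circ\bar s)\circ\bar t$ is defined.

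The main obstacle is purely one of bookkeeping: one must hit on the right auxiliary element $y=t^+s^+$ (neither $t^+$ nor $s^+$ alone works) together with the witness $s^+t\in\bar t$ for $y\circ\bar t$; in the converse direction, one must trim the supplied $v$ to $v':=b^*v$ so that its $^+$-projection lands exactly on $b^*$ rather than merely dominating it. Properness is used precisely once in each direction: to secure the identification $(s^+t)c=t^+s$ in the forward direction, and implicitly through the bijectivity remark at the start when interpreting strongness of $\circ$ element-wise.
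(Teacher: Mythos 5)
Your proof is correct and follows essentially the same route as the paper's: both directions hinge on the same auxiliary projection $y=s^+t^+$, the same witnesses $s^+t$ and $t^+s$, and the same use of properness to identify $(s^+t)c$ with $t^+s$; your normalization $c^+=(s^+t)^*$ and trimming $v'=b^*v$ correspond exactly to the paper's substitution $v=(s^+t)^*v'$ and its standing observation that witnesses can be cut down so that the relevant projection is an equality. The only difference is cosmetic: you present the two implications in the opposite order.
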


\begin{proof}  (1) Assume first that ${\mathrm{(EP)^r}}$ holds and show that $\circ$ is strong.  Let $p,q\in S/\sigma$ and $x\in E$ be such that $x\circ (pq)$ and $x\circ p$ are defined. Let, further, $y=x\circ p$ and $s\in pq$, $t\in p$ be such that $x=s^+=t^+$ and $y=t^*$. Take any element $u\in q$. By ${\mathrm{(EP)^r}}$ there is $v\in q$ such that $t^+s=tv$. Since $t^+=s^+$, this yields $s=tv$. Now, letting $e=v^+$, 
this implies $t^+=s^+=(tv^+)^+=(te)^+$. Therefore, $t=t^+t=(te)^+t=te$ and consequently $v^+=e\geq t^*=y$.
Thus $y\circ q$ is defined, so that $\circ$ is strong.

Conversely, suppose that the partial action $\circ$ is strong and show that ${\mathrm{(EP)^r}}$ holds. 
Assume that $s,t,u\in S$ are such that $s\mathrel{\sigma} tu$. Then $s^+t^+\circ \sigma(tu)$ is defined. Since $s^+t^+\circ \sigma(t)$ is obviously defined, too, and since $\circ$ is strong,  it follows that $(s^+t^+\circ \sigma(t)) \circ \sigma(u)$ is defined. Thus there exists $v'\in \sigma(u)$ satisfying $(v')^+\geq s^+t^+\circ \sigma(t) = (s^+t)^*$. Then  
$$(s^+tv')^+=(s^+t(v')^+)^+=  (s^+t)^+=s^+t^+=(t^+s)^+.
$$
Since also $s^+tv' \mathrel{\sigma} tu \mathrel{\sigma} t^+s$, we obtain 
$t^+s=s^+tv'=t(s^+t)^*v'$. Setting $v=(s^+t)^*v'$, we have that $v\in \sigma(u)$ and
$t^+s=tv$, so that ${\mathrm{(EP)^r}}$ holds.

(2) follows by a dual argument.
\end{proof}

 We call a restriction semigroup $S$ {\em left extra proper} ({\em right extra proper}) if the underlying  left partial action $\cdot$  (respectively, the underlying right partial action $\circ$) is strong.  We call $S$ {\em extra proper} if both $\cdot$ and $\circ$ are strong.  Proposition \ref{prop:char} shows that this terminology agrees with that proposed in \cite{CG}.
 
 \subsection{Strong partial actions and {\em FA}-monoids from \cite{FGG}} It is easy to verify that {\em FA}-monoids, considered in \cite[Section 9]{FGG}, are precisely the ample extra proper $F$-restriction monoids.
 
\subsection{Partial actions restricting global actions} 
Let $*$ be an order-preserving left action of a monoid $T$ on a poset $X$ and $Y$ be a subset of $X$, which is an order ideal of $X$ and a meet semilattice under the induced order.  Furthermore, we assume that the induced partial action of $T$ on $Y$ satisfies axioms (A), (B) and (C), so that the semigroup $M(T,Y)$ can be formed. We additionally assume that for every $t\in T$ and $x\in X$
\begin{equation}\label{eq:axiom1} 
\text{ if } x\leq t*y, \text{ where } y\in Y, \text{ then } x=t*z \text{ for some } z\leq y.
\end{equation}

\begin{lemma}\label{lem:products}  For any $x,y\in Y$ and $s\in S$ such that $x\circ s$ is defined the meet $x\wedge s*y$ exists in $X$ and 
$$
x\wedge s*y=s\cdot ((x\circ s)\wedge y),
$$ 
where $\circ$ is the right partial action reverse to $\cdot$.
\end{lemma}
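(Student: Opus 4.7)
The plan is to identify the meet on the nose as $s*(y_0\wedge y)$, where $y_0:=x\circ s$, by showing this element is a lower bound of $\{x,s*y\}$ in $X$ and then using Axiom~\eqref{eq:axiom1} together with injectivity of the partial action on $Y$ to show it is the greatest such lower bound.

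First, I would unpack notation. Since $x\circ s$ is defined, set $y_0=x\circ s$, so $s\cdot y_0=x$; because the partial action on $Y$ is induced by restriction from the global action $*$, this reads $s*y_0=x$. The meet $y_0\wedge y$ exists in $Y$ because $Y$ is a meet semilattice. Since $\mathrm{dom}(\varphi_s)$ is an order ideal of $Y$ (axiom (A)) and $y_0\wedge y\le y_0\in\mathrm{dom}(\varphi_s)$, we have $y_0\wedge y\in\mathrm{dom}(\varphi_s)$, so $s\cdot(y_0\wedge y)$ is defined and coincides with $s*(y_0\wedge y)$.

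Next, order-preservation of $*$ gives $s*(y_0\wedge y)\le s*y_0=x$ and $s*(y_0\wedge y)\le s*y$, so it is indeed a lower bound of $\{x,s*y\}$ in $X$. Now take any $w\in X$ with $w\le x=s*y_0$ and $w\le s*y$. Applying \eqref{eq:axiom1} to each inequality yields elements $z',z\in X$ with $z'\le y_0$, $z\le y$, and $s*z'=s*z=w$. Since $Y$ is an order ideal of $X$ and $y_0,y\in Y$, both $z'$ and $z$ lie in $Y$; moreover $w=s*z'\in\mathrm{ran}(\varphi_s)\subseteq Y$, so $s*z\in Y$ forces $z\in\mathrm{dom}(\varphi_s)$, and $z'\in\mathrm{dom}(\varphi_s)$ for the same reason (or by being below $y_0$). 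By axiom~(B) the map $\varphi_s$ is an order-isomorphism from its domain onto its range, hence injective; therefore $z=z'$. Consequently $z\le y_0$ and $z\le y$, so $z\le y_0\wedge y$ in $Y$, and order-preservation yields $w=s*z\le s*(y_0\wedge y)$.

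This establishes that $s*(y_0\wedge y)$ is the meet $x\wedge s*y$ in $X$, which is precisely $s\cdot((x\circ s)\wedge y)$. The only subtle step is the injectivity argument in the uniqueness half: it crucially combines axiom~\eqref{eq:axiom1} (to produce the preimages $z,z'$ in $Y$) with the fact that $w\in\mathrm{ran}(\varphi_s)$ to force both preimages into $\mathrm{dom}(\varphi_s)$, where~(B) applies; without the order-ideal hypothesis on $Y$ or the order-isomorphism clause in~(B), one would not be able to collapse $z$ and $z'$.
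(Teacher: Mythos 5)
Your proof is correct and follows essentially the same route as the paper's: identify $s\cdot((x\circ s)\wedge y)$ as the candidate, verify it is a lower bound via order-preservation and the order-ideal property of ${\mathrm{dom}}(\varphi_s)$, and establish maximality by pulling an arbitrary lower bound back along $*$ using \eqref{eq:axiom1} and axiom (B). The only cosmetic difference is that the paper applies \eqref{eq:axiom1} once (to $q\leq s*y$) and then transports $q\leq x$ through the order-isomorphism $\varphi_s^{-1}$, whereas you apply it to both inequalities and use injectivity of $\varphi_s$ to identify the two preimages; both variants are valid.
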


\begin{proof} Since $Y$ is a semilattice and both $x\circ s$ and $y$ belong to $Y$,  the meet $(x\circ s) \wedge y$ exists in $Y$. Let $p= (x\circ s) \wedge y$. Since $p\leq x\circ s$ and $s\cdot (x\circ s)$ is defined, $s\cdot p$ is defined and $s\cdot p\leq x$. From $p\leq y$ we have $s\cdot p=s*p\leq s*y$ and thus $s\cdot p$ is a lower bound for $x$ and $s*y$. Assume that $q\leq x,s*y$.  Since $q\leq s*y$,  by \eqref{eq:axiom1}, there is some $z\leq y$ such that $q=s*z$.  But $z,q\in Y$, so that $q=s\cdot z$ and  $z=q\circ s$. Now, $q\leq x$ implies $q\circ s\leq x\circ s$ and so
$q\circ s\leq (x\circ s)\wedge y=p$. This yields $q\leq s\cdot p$. We have proved that $x\wedge (s\cdot y)$ exists in $X$ and equals $s\cdot p=s\cdot ((x\circ s)\wedge y)$, as requied.
\end{proof}


\subsection{Ultra proper restriction semigroups}
A left partial action $\cdot$ of a monoid $T$ on a set $Y$ will be called a {\em partially defined action}\footnote{The term {\em partial action} would be  more appropriate here, but in this paper it has a different meaning.} if for all $s,t\in T$ and $x\in Y$ the following condition is met:

\vspace{0.1cm}
\begin{enumerate}
\item[(PDA)] $(st)\cdot x$ is defined if and only if $t\cdot x$ and $s\cdot (t\cdot x)$ are defined.
\end{enumerate}

It is clear that the above condition holds if and only if the dual prehomomorphism $\varphi:T\to \mathcal{PT}(Y)$ corresponding to $\cdot$ is in fact a homomorphism. We call a restriction semigroup {\em ultra proper} if its underlying left partial action $\cdot$ is a partially defined action.

\begin{lemma} A restriction semigroup $S$ is ultra proper if and only if its underlying right partial action $\circ$ is a partially defined action.
\end{lemma}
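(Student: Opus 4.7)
The plan is to use the bijective correspondence between the two partial actions: for each $t\in T$ and $y\in Y$, the element $t\cdot y$ is defined with value $x$ if and only if $x\circ t$ is defined with value $y$ (this is exactly the content of the two displayed equivalences following the definition of reverse partial actions). Under this correspondence, I will show that the condition (PDA) for $\cdot$ translates into the analogous condition
\begin{itemize}
\item[${\mathrm{(PDA^r)}}$] $x\circ (st)$ is defined if and only if $x\circ s$ and $(x\circ s)\circ t$ are defined
\end{itemize}
for $\circ$, which is precisely the statement that $\circ$ is a partially defined (right) action.

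For the forward direction, suppose $\cdot$ satisfies (PDA) and assume first that $x\circ(st)$ is defined; set $y=x\circ(st)$, so that $(st)\cdot y$ is defined and equals $x$. By (PDA) applied to $\cdot$, both $t\cdot y$ and $s\cdot(t\cdot y)$ are defined, with $s\cdot(t\cdot y)=(st)\cdot y=x$. Letting $z=t\cdot y$, the equality $s\cdot z=x$ means $x\circ s$ is defined and equals $z$; and $t\cdot y=z$ means $z\circ t$ is defined and equals $y$, i.e.\ $(x\circ s)\circ t$ is defined. Conversely, assume $x\circ s$ is defined with value $z$, and $(x\circ s)\circ t=z\circ t$ is defined with value $y$. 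Then $s\cdot z=x$ and $t\cdot y=z$ are both defined, so $s\cdot(t\cdot y)=s\cdot z=x$ is defined as well. Applying (PDA) for $\cdot$ gives $(st)\cdot y$ defined, which translates to $x\circ(st)$ being defined. This establishes ${\mathrm{(PDA^r)}}$.

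The converse implication, that ${\mathrm{(PDA^r)}}$ for $\circ$ yields (PDA) for $\cdot$, follows by an entirely symmetric argument, swapping the roles of $\cdot$ and $\circ$ and using the dual translation $x=t\cdot y$ iff $y=x\circ t$. I do not expect any real obstacle here: the equivalence is a purely formal consequence of the fact that, for each fixed $t\in T$, the graphs of $\varphi_t$ and $\varphi_t^{-1}$ are related by swapping coordinates, so a statement about compositions $\varphi_s\varphi_t$ is transported to the corresponding statement about $\varphi_t^{-1}\varphi_s^{-1}$. The only minor point to keep track of is the order reversal when composing: $(st)$ in $\cdot$ corresponds to $(st)$ in $\circ$ via the anti-isomorphism $t\mapsto \varphi_t^{-1}$, so the decomposition $\varphi_{st}=\varphi_s\varphi_t$ becomes $\psi_{st}=\psi_t\psi_s$, which is exactly what ${\mathrm{(PDA^r)}}$ expresses.
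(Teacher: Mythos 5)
Your proof is correct and follows essentially the same route as the paper: translate between $\cdot$ and $\circ$ via the correspondence $t\cdot y=x\iff x\circ t=y$ and transport condition (PDA) across it. The paper's version is merely terser --- it writes out only the nontrivial half of the biconditional for $\circ$ (the other half being automatic from the partial-action axiom (LP2)) and delegates the reverse implication to symmetry, exactly as you do.
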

\begin{proof} 
Assume that $\cdot$ is a partially defined action and show that so is $\circ$. Let $x\in P(S)$ and $s,t\in S/\sigma$ be such that that $x\circ st$ is defined. Putting $y=x\circ st$ we get $x=st\cdot y$. Since $\cdot$ is a partially defined action, $t\cdot y$ and $s\cdot (t\cdot y)$ are defined. Let $z=t\cdot y$ and $u=s\cdot z$. Then $y=z\circ t$ and $z=u\circ s$, whence $y=(u\circ s)\circ t$. Thus $\circ$ is a partially defined action. The `if' part follows by symmetry.
\end{proof}

It follows that a left partially defined action of a monoid on a semilattice, satisfying axioms (A), (B), (C), is necessarily both strong and antistrong, and we have the following inclusions of classes of restriction semigroups:
$$
\text{Proper } \supset \text{ Extra proper } \supset \text{ Ultra proper }
$$

\begin{example} {\em By Proposition \ref{prop:w_prod} $W$-products (and in particular semidirect products) of semilattices by monoids are ultra proper.}
\end{example}

\begin{example} {\em Ultra proper restriction semigroups, which are inverse, are precisely semidirect products of semilattices by groups. This is because a monoid homomorphism $\varphi\colon G\to {\mathcal I}(Y)$ with $G$ being a group has its image in the unit group of ${\mathcal I}(Y)$.}
\end{example}

\begin{example}\label{ex:free}{\em The free restriction monoid and the free restriction semigroup  are ultra proper. We verify this, for example, for  $F{\mathcal{RM}}(A)$. The underlying left partial action of $A^*$ on ${\mathcal Y}'$ is given by:  $t\cdot B$ is defined if and only if $t^{-1}\in B$ in which case $t\cdot B=t*B$. Assume that $(st)\cdot B$ is defined. Then $t^{-1}s^{-1}\in B$. Since $B$ is prefix-closed, we have $t^{-1}\in B$, so that $t\cdot B$ is defined. Further, $t^{-1}s^{-1}\in B$ implies $s^{-1}\in t\cdot B$, so that $s\cdot (t\cdot B)$ is defined. Note that the free inverse semigroup and monoid are not ultra proper by the previous example. }
\end{example}


\subsection{Ultra $F$-restriction monoids} \label{subsub:a} 
We call a restriction monoid $S$ {\em ultra $F$-restriction} if it is ultra proper and $F$-restriction. This means that the left partial action $\cdot$ underlying $S$ is a partially defined action and ${\mathrm{dom}}(\varphi_t)$ (and then also ${\mathrm{ran}}(\varphi_t)$)  is a principal order ideal for every $t\in S/\sigma$, where $\varphi\colon S/\sigma\to {\mathcal I}(P(S))$ is the homomorphism corresponding to $\cdot$. In other words, $\varphi$ is a monoid homomorphism from $S/\sigma$ to the Munn monoid $T_{P(S)}$ of the semilattice $P(S)$. 

\begin{example}\label{ex:free1} {\em The free restriction monoid $F{\mathcal{RM}}(A)$ is ultra $F$-restriction. This follows from Example \ref{ex:free} and the fact that ${\mathrm{dom}}(\varphi_t)$ is a principal order ideal of ${\mathcal Y}'$ (recall that the order on ${\mathcal Y}'$ is the reverse inclusion)  generated by $\{1, t_1, t_1t_2, \dots, t_1\cdots t_n\}$, where $t^{-1}=t_1\cdots t_n$ and $t_i\in A$ for all $i$. Note that  $F{\mathcal{RS}}(A)$  is not $F$-restriction since ${\mathrm{dom}}(\varphi_1)={\mathcal Y}$ is not a principal order ideal.}
\end{example}


\subsection{An ultra $F$-restriction cover of a restriction monoid}\label{sub:ultra_f_cover}  The construction in this section is inspired by \cite{FGG} (and an explicit connection  with \cite{FGG} will be stated later on in Remark \ref{rem:link}). Let $S$ be a restriction monoid and $A$ be its generating set as a $(2,1,1,0)$-algebra. Let $T=A^*$ be the free monoid generated by $A$ (in the usual monoid signature $(2,0)$). We put $E=P(S)$ and for $v\in T$ let $\overline{v}$ be the value of $v$ in $S$. For $v\in T$ and $e\in E$ we set
\begin{equation*}\label{eq:act1}
v\cdot e \text{ is defined  if and only if  } \overline{v}^*\geq e,
\end{equation*}
in which case we put $v\cdot e = (\overline{v}e)^+$.

\begin{lemma}\label{lem:important}\mbox{}
\begin{enumerate}
\item The partial map $\cdot$ is a partially defined action of $T$ on $E$ and the monoid $M(T,E)$ is ultra $F$-restriction (note that $M(T,E)$ is even ample since $T$ is cancellative).
\item The map $M(T,E)\to S$ given by $(e,v)\mapsto e\overline{v}$ is a surjective projection separating $(2,1,1,0)$-morphism.
\end{enumerate}
\end{lemma}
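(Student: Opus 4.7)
The plan is to handle (1) by checking the partial-action axioms (A)--(C), the defining property (PDA), and then reading off the $F$-restriction and ample conclusions; and to handle (2) by structural induction on $S$ together with a direct verification of preservation of the four operations.

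For part (1), axioms (A) and (C) are immediate: $\mathrm{dom}(\varphi_v)$ is the principal order ideal of $E$ generated by $\bar v^*$, which is nonempty and closed downwards. Axiom (B), that $\varphi_v$ restricts to an order-isomorphism onto its image, comes from the restriction-semigroup identities $(\bar v e)^+\bar v = \bar v e$ and $(\bar v e)^* = e$ (the latter using $(xy)^* = (x^*y)^*$ together with $\bar v^*\ge e$), which realize $f \mapsto (f\bar v)^*$ as the inverse of $\varphi_v$ and show that both directions preserve order. The heart of the argument is PDA:
\[
(\overline{st})^* \ge e \iff \bar t^* \ge e \ \text{and}\ \bar s^* \ge (\bar t e)^+.
\]
For $\Rightarrow$, the inequality $(xy)^* \le y^*$ (itself a consequence of $xy = xyy^*$ and $(xy^*)^* = x^*y^*$) gives $\bar t^*\ge e$; then the axiom $x^*y = y(xy)^*$ with $x=\bar s$, $y=\bar t e$, combined with $(\bar s\bar t e)^* = e$, yields $\bar s^*(\bar t e) = \bar t e$, whence $\bar s^* \ge (\bar t e)^+$. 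For $\Leftarrow$, the identity $(xy)^* = (x^*y)^*$ lets us compute $(\bar s\bar t e)^* = (\bar s^*\bar t e)^* = (\bar t e)^* = e$, and then $(\overline{st})^* e = e(\bar s\bar t e)^* = e$. Axiom (LP1) is a one-line check, and (LP2) is absorbed into PDA. Since $\mathrm{dom}(\varphi_v)$ is principal, $M(T,E)$ is $F$-restriction by Lemma~\ref{lem:f_rest1}, and ampleness follows from cancellativity of $A^*$ together with injectivity of each $\varphi_v$ on its domain: an equality of products forces equal second coordinates, and injectivity then propagates the equality to the first coordinates, ensuring that multiplying by $c^+$ or $c^*$ preserves cancelability.

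For part (2), surjectivity proceeds by induction on the depth of a $(2,1,1,0)$-term representing $s\in S$. A generator $a\in A$ is the image of $(\bar a^+, a)$; the projections $a^+$ and $a^*$ are images of $(a^+,1)$ and $(a^*,1)$; and for a product, if $a = e_a\overline{v_a}$ and $b = e_b\overline{v_b}$, then the identity $xy^+ = (xy)^+ x$ gives $\overline{v_a}\, e_b = (\overline{v_a} e_b)^+\overline{v_a}$, so $ab = e_a(\overline{v_a} e_b)^+\overline{v_av_b}$, which is the image of $(e_a(\overline{v_a} e_b)^+, v_av_b)$. Projection separation is immediate, since the map restricts to the identity $(e,1)\mapsto e$ on $P(M(T,E)) = \{(e,1) : e\in E\}$. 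Preservation of the four operations rests on the single reduction $e\bar v = \bar v(e\circ v)$, which is $(\bar v x)^+\bar v = \bar v x$ applied with $x = e\circ v$. From it, $(e\bar v)^* = (\bar v(e\circ v))^* = e\circ v$ and $(e\bar v)^+ = e$ follow immediately; and the product check collapses to
\[
(e\bar v)(f\bar w) = \bar v\bigl((e\circ v)\wedge f\bigr)\bar w,
\]
which is exactly the image of $(e,v)(f,w) = \bigl(v\cdot((e\circ v)\wedge f), vw\bigr)$; preservation of the identity $(1_E,1)\mapsto 1_S$ is trivial.

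The only real subtlety is PDA, where one has to combine the identities $x^*y = y(xy)^*$, $(xy)^* = (x^*y)^*$, and $(x^+y)^+ = x^+y^+$ in the correct sequence to pass between domain conditions on $\varphi_t$ and range conditions on $\varphi_s$. Beyond that, everything reduces to a chain of applications of the restriction-semigroup axioms and the Cornock--Gould description (Theorem~\ref{th:CG}) of the multiplication on $M(T,E)$.
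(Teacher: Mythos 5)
Your argument is correct and follows essentially the same route as the paper's: condition (PDA) is verified by the same chain of restriction-semigroup identities (deducing $\overline{w}^*\geq e$ and $\overline{v}^*\geq(\overline{w}e)^+$ from $(\overline{vw})^*\geq e$, and conversely), and the morphism property in (2) reduces to the same computation, which you organize slightly more cleanly around the single identity $e\overline{v}=\overline{v}(e\circ v)$, while the details you add (axioms (A)--(C), ampleness, surjectivity) are ones the paper dismisses as routine. One small caveat: (PDA) only governs \emph{when} composites are defined, so your remark that (LP2) is ``absorbed into PDA'' skips the value equality $v\cdot(w\cdot e)=(vw)\cdot e$; this still requires the one-line check $(\overline{v}(\overline{w}e)^+)^+=(\overline{v}\,\overline{w}e)^+$, which is an instance of \eqref{eq:consequences} and which the paper does carry out explicitly.
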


\begin{proof}  (1) Assume that $vw\cdot e$ is defined and show that $w\cdot e$ and $v\cdot (w\cdot e)$ are defined. By assumption, $(\overline{vw})^*\geq e$. Therefore,
$$
\overline{w}^*\geq (\overline{v}^*\overline{w})^*=(\overline{v}\,\overline{w})^*=(\overline{vw})^* \geq e.
$$
Thus $w\cdot e$ is defined. To show that $v\cdot (w\cdot e)$ is defined, we verify that $\overline{v}^*\geq (\overline{w}e)^+$. We note that the latter is equivalent to $\overline{w}e=\overline{v}^*\overline{w}e$, which holds because $$
\overline{v}^*\overline{w}e=\overline{w}(\overline{v}^*\overline{w})^*e=\overline{w}(\overline{v}\,\overline{w})^*e=\overline{w}e.
$$

Assume now that $w\cdot e$ and $v\cdot (w\cdot e)$ are defined and show that $wv\cdot e$ is defined. By assumption,
$\overline{w}^*\geq e$ and $\overline{v}^*\geq (\overline{w}e)^+$. Then we obtain that
$$
(\overline{v}\overline{w})^*=(\overline{v}^*\overline{w})^*\geq ((\overline{w}e)^+\overline{w})^*=(\overline{w}e)^*=e.
$$

If both  $w\cdot e$ and $v\cdot (w\cdot e)$ are defined, we have 
$$
v\cdot (w\cdot e)=(\overline{v}(\overline{w}e)^+)^+=(\overline{v}\,\overline{w}e)^+=vw\cdot e.
$$
The remaining axioms of a partial action, as well as that each $\sigma$-class of $M(T,E)$ has the maximum element, are easy to verify.

(2) We first show that the given assignment preserves the multiplication. Let $\circ$ be the right partially defined action reverse to $\cdot$. Then $e\circ v$ is defined if and only if $\overline{v}^+\geq e$ and whenever defined it equals $(e\overline{v})^*$. Using this, we calculate, for any $(e,v), (f,u)\in M(T,E)$: 
$$
(e,v)(f,u)=(v\cdot ((e\circ v)\wedge f),vu)=((\overline{v}(e\overline{v})^*f)^+,vu)=((e\overline{v}f)^+,vu)\mapsto (e\overline{v}f)^+\overline{vu};
$$
$$
e\overline{v}f\overline{u}=e(\overline{v}f)^+\overline{v}\, \overline{u}=(e(\overline{v}f)^+)^+\overline{v}\,\overline{u}=(e\overline{v}f)^+\overline{v}\overline{u}.
$$
So preservation of the multiplication is verified. It is easy to see that $^*$, $^+$ and identity are preserved, too.  That the assignment is projection separating is immediate.
\end{proof}

\subsection{An ultra proper cover of a restriction semigroup}\label{sub:modification}

Let $S$ be a restriction semigroup and let $A$ be its generating set as a $(2,1,1)$-algebra. Let $T=A^*$ and let  $\epsilon$ denote the empty word. Similarly to Section \ref{sub:ultra_f_cover} (and setting $\epsilon$ to act as the identity map), one readily constructs a partially defined action $\cdot$ of $T$ on $E=P(S)$ such that the semigroup $M(T,E)$ can be formed and is ultra proper. Furthermore, the map $\varphi:  M(T,E)\to S$ given by $(e,\epsilon)\mapsto e$ and $(e,v)\mapsto e\overline{v}$, if $v\neq\epsilon$,  is a surjective projection separating $(2,1,1)$-morphism.

The monoid $S^1=S\,\cup \,\{1\}$, where $1\not\in S$, becomes a restriction monoid  if we set $1^*=1^+=1$. We have that $P(S^1)=E^1$ and refer to $S^1$ as the restriction monoid obtained from the restriction semigroup $S$ by {\em adjoining an identity element}. The set $A$ is a generating set of $S^1$ as a $(2,1,1,0)$-algebra. Applying the construction of Section~\ref{sub:ultra_f_cover} we can construct the monoid $M(T, E^1)$ and the cover $M(T, E^1)\to S^1$. It is easy to see that $$M(T, E^1)=M(T,E)\cup (\epsilon, 1)$$ as a set and, moreover, $M(T, E^1)$ is a restriction monoid obtained from the restriction semigroup $M(T,E)$ by adjoining an identity element.


\subsection{Ultra $F$-restriction monoids are the monoids $Y*_m T$ from \cite{FGG}}\label{sub:mprod} In this section we show that a monoid is ultra $F$-restriction if and only if it is $(2,1,1,0)$-isomorphic to a monoid
$Y*_mT$ considered in \cite{FGG,GouldS}. We first recall its definition.  

Let $T$ be a monoid and $Y$ a semilattice with identity, $\epsilon$. Furthermore, let $*$ be a left action and $\bullet$ a right action  of $T$ on $Y$ such that for all $t\in T$ and $x,y\in Y$:
\begin{equation}\label{eq:z1}
t*(x\wedge y)= t*x\wedge t*y, \,\, (x\wedge y)\bullet t=x\bullet t \wedge y\bullet t;
\end{equation}
\begin{equation}\label{eq:z2}
(t*x)\bullet t=\epsilon \bullet t \wedge x, \,\,  t*(x\bullet t)=x\wedge t*\epsilon.
\end{equation}
The actions $*$ and $\bullet$ are then said to form a {\em double action} of $T$ on $Y$. Let
\begin{equation}\label{eq:z3}
Y*_mT=\{(y,t)\in Y\times T\colon y\leq t*\epsilon\}
\end{equation}
and define the multiplication on it by
$$
(x,s)(y,t)=(x\wedge s*y, st).
$$
Further, 
for every $(y,t)\in X*_mT$
we put 
$$
(y,t)^*=(y\bullet t,1), \,\, (y,t)^+=(y,1).
$$

\begin{proposition}[\cite{FGG}]
$Y*_mT$ is a proper restriction monoid with identity $(\epsilon, 1)$ and
$$P(Y*_mT)=\{(y,t)\in Y*_mT\colon t=1\}.$$
The semilattice $P(Y*_mT)$  is order isomorphic to $Y$ via the map $(y,1)\mapsto y$;  $(y,t)\mathrel{\sigma} (x,s)$ if and only if $t=s$ and $(Y*_mT)/\sigma\simeq T$ via the map $\sigma(y,t)\mapsto t$.
\end{proposition}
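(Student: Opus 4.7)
My plan is to recognize $Y*_mT$ as the Cornock--Gould semigroup $M(T,Y)$ for a suitably chosen partial action, and then invoke Theorem \ref{th:CG} to get all the listed properties at once. Concretely, I would define a left partial action $\cdot$ of $T$ on $Y$ by declaring $t\cdot y$ to be defined precisely when $y\leq \epsilon\bullet t$, with value $t\cdot y=t*y$. The corresponding map $\varphi_t$ should turn out to be an order-isomorphism $\downarrow(\epsilon\bullet t)\to \downarrow(t*\epsilon)$ whose inverse is $y\mapsto y\bullet t$.

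The first step is to verify (LP1), (LP2) and axioms (A), (B), (C) for this $\cdot$. Axiom (LP1) is immediate since $\epsilon\bullet 1=\epsilon$ and $1*y=y$. Axioms (A) and (C) are clear once one shows the range description $\mathrm{ran}(\varphi_t)=\downarrow(t*\epsilon)$; the nontrivial inclusion uses $t*(x\bullet t)=x$ from \eqref{eq:z2} for $x\leq t*\epsilon$. Axiom (B) then drops out from the two identities in \eqref{eq:z2} together with the order-preservation of $*$ and $\bullet$ which follows from \eqref{eq:z1}. The main place where I expect to pause is (LP2): given $y\leq\epsilon\bullet t$ and $t*y\leq\epsilon\bullet s$, I would apply $-\bullet t$ to the second inequality and use $(t*y)\bullet t=\epsilon\bullet t\wedge y=y$ from \eqref{eq:z2} to obtain $y\leq(\epsilon\bullet s)\bullet t=\epsilon\bullet(st)$; that the two computed values agree is then just associativity of $*$.

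With the partial action in hand, its reverse right partial action is $x\circ s=x\bullet s$ for $x\leq s*\epsilon$, so as sets $M(T,Y)=Y*_mT$. The unary operations agree by definition, and the two multiplications coincide by the computation
\[
s\cdot((x\circ s)\wedge y)=s*((x\bullet s)\wedge y)=s*(x\bullet s)\wedge s*y=(x\wedge s*\epsilon)\wedge s*y=x\wedge s*y,
\]
which invokes \eqref{eq:z1}, \eqref{eq:z2}, and $x\leq s*\epsilon$. Theorem \ref{th:CG} then delivers that $Y*_mT$ is a proper restriction semigroup with $P(Y*_mT)=\{(y,1)\colon y\in Y\}$ order-isomorphic to $Y$, along with the stated description of $\sigma$ and the $(Y*_mT)/\sigma\simeq T$ identification via $\sigma(y,t)\mapsto t$. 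The one remaining item is to confirm that $(\epsilon,1)$ is a two-sided identity, which is a one-line check using that $\epsilon$ is the top of $Y$ and hence absorbs any meet $\wedge$. Overall I do not foresee a conceptual obstacle; the only bookkeeping that requires care is (LP2) and the surjectivity half of the range computation, and both reduce directly to \eqref{eq:z2}.
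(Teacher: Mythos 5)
Your proposal is correct and follows essentially the same route the paper itself takes: the statement is quoted from \cite{FGG} without proof, but the identification of $Y*_mT$ with $M(T,Y)$ for the partial action $t\cdot y=t*y$ (defined when $y\leq\epsilon\bullet t$) is exactly the content and proof of Proposition~\ref{lem:lem12}, including the same verifications via \eqref{eq:z1}--\eqref{eq:z2} and the same multiplication computation. The only nitpick is that the general fact you invoke at the end is not Theorem~\ref{th:CG} (the representation direction) but the construction statement from \cite{CG} quoted just before it, namely that any $M(T,Y)$ built from a partial action satisfying (A), (B), (C) is a proper restriction semigroup with $P(M(T,Y))=\overline{Y}$.
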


It is natural to ask if  the partial action underlying $Y*_mT$ has some specific properties caused by the 
symmetry of the double action defining it, a question which we now consider.
Assume that $*$ and $\bullet$ form a double action of $T$ on $Y$. 
Define the following partial map $T\times Y\to Y$:
\begin{equation}\label{eq:we}
t\cdot y \text{ is defined if and only if } y\leq \epsilon\bullet t \text{ in which case set } t\cdot y=t*y.
\end{equation}
For $y\in Y$ let $y^{\downarrow}=\{x\in Y\colon x\leq y\}$ denote the principal order ideal generated by $y$.

\begin{proposition}\label{lem:lem12}\mbox{}
\begin{enumerate}
\item The map $\cdot$ is a partially defined action, which satisfies (A), (B) and (C), so that we can form  the semigroup  $M(T,Y)$. Moreover, $M(T,Y)$ is an ultra $F$-restriction monoid.
\item $M(T,Y)$ and $Y*_mT$ are equal as $(2,1,1,0)$-algebras.
\end{enumerate}
\end{proposition}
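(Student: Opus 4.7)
The overall strategy is to express everything about $\cdot$ in terms of the two given actions $*$ and $\bullet$, unfold the definitions, and reduce each required identity to one of the axioms \eqref{eq:z1}, \eqref{eq:z2}. For (1), I first identify the domain and range of the associated partial transformation $\varphi_t$. Directly from the definition \eqref{eq:we}, ${\mathrm{dom}}(\varphi_t)=(\epsilon\bullet t)^{\downarrow}$. For the range, using $t*(x\bullet t)=x\wedge t*\epsilon$ from \eqref{eq:z2}, one checks that any $x\leq t*\epsilon$ lies in the image (witnessed by $x\bullet t$, which belongs to ${\mathrm{dom}}(\varphi_t)$ by \eqref{eq:z1} since $(t*\epsilon)\bullet t=\epsilon\bullet t$), and conversely any image is bounded above by $t*\epsilon$. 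Hence ${\mathrm{ran}}(\varphi_t)=(t*\epsilon)^{\downarrow}$. Both are principal order ideals, so axioms (A) and (C) follow. Axiom (B) reduces to showing $\varphi_t$ is an order isomorphism; the map $x\mapsto x\bullet t$ is its inverse by \eqref{eq:z2} (a second application yields $(t*y)\bullet t=\epsilon\bullet t\wedge y = y$ when $y\leq\epsilon\bullet t$), and monotonicity on both sides follows from \eqref{eq:z1}.

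Next I verify (LP1) and the partially-defined-action condition (PDA). (LP1) is immediate since $\epsilon\bullet 1=\epsilon$ is the top of $Y$. For (PDA), the forward direction starts from $y\leq\epsilon\bullet(st)=(\epsilon\bullet s)\bullet t$; applying the meet-preserving operation $t*$ and \eqref{eq:z2} yields $t*y\leq(\epsilon\bullet s)\wedge t*\epsilon\leq\epsilon\bullet s$, so $s\cdot(t\cdot y)$ is defined, while $y\leq\epsilon\bullet t$ follows from $\epsilon\bullet s\leq\epsilon$. Conversely, from $t*y\leq\epsilon\bullet s$ I apply $\bullet t$ and use $(t*y)\bullet t=y$ (valid because $y\leq\epsilon\bullet t$) to conclude $y\leq(\epsilon\bullet s)\bullet t=\epsilon\bullet(st)$. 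That $\cdot$ and $*$ agree on their common domain makes the value $(st)\cdot y=s\cdot(t\cdot y)$ automatic from the fact that $*$ is a global action. Combining this with the principality of the domains already established and Lemma~\ref{lem:f_rest1}, $M(T,Y)$ is ultra $F$-restriction.

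For (2), the underlying sets coincide because $(y,t)\in M(T,Y)$ iff $y\in{\mathrm{ran}}(\varphi_t)=(t*\epsilon)^{\downarrow}$, which is exactly the defining condition \eqref{eq:z3} for $Y*_mT$. Moreover, $y\circ t$ is the unique element of ${\mathrm{dom}}(\varphi_t)$ mapped to $y$ by $\varphi_t$, and the previous paragraph identifies this element as $y\bullet t$. Hence the unary operations $^*$ and $^+$ defined on $M(T,Y)$ match those of $Y*_mT$, and the identity $(\epsilon,1)$ agrees. For multiplication, on $M(T,Y)$ we have $(x,s)(y,t)=(s\cdot((x\circ s)\wedge y),st)=(s*((x\bullet s)\wedge y),st)$, and by \eqref{eq:z1} this equals $(s*(x\bullet s)\wedge s*y,st)$; another application of \eqref{eq:z2} gives $s*(x\bullet s)=x\wedge s*\epsilon=x$ since $x\leq s*\epsilon$, yielding $(x\wedge s*y,st)$ — precisely the product in $Y*_mT$.

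The one subtle spot, which is the main obstacle, is the second (backward) direction of (PDA): one must simultaneously use that $\epsilon$ is the top of $Y$ and the non-trivial identity $(t*y)\bullet t=\epsilon\bullet t\wedge y$ from \eqref{eq:z2} to recover $y\leq(\epsilon\bullet s)\bullet t$ from information that \emph{a priori} only constrains $t*y$. Once this interplay between $*$ and $\bullet$ is spelled out, every remaining verification is a short computation with \eqref{eq:z1} and \eqref{eq:z2}.
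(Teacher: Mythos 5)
Your proposal is correct and follows essentially the same route as the paper: identify $\mathrm{dom}(\varphi_t)=(\epsilon\bullet t)^{\downarrow}$ and $\mathrm{ran}(\varphi_t)=(t*\epsilon)^{\downarrow}$, exhibit $x\mapsto x\bullet t$ as the inverse of $\varphi_t$ via \eqref{eq:z2}, and reduce the coincidence of the products to $s*(x\bullet s\wedge y)=x\wedge s*y$ using \eqref{eq:z1} and \eqref{eq:z2}. The only cosmetic difference is that the paper dispatches part of the (PDA) verification by observing that $\cdot$, being a restriction of the global action $*$, is strong, whereas you verify both implications of (PDA) (and the "subtle" backward direction, which is really (LP2)) by direct computation with $\bullet t$ — arguably a more self-contained rendering of the same argument.
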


\begin{proof} (1) Since $\cdot$ is obtained by restricting a global action $*$, it is a strong partial action. 
To verify (A) and (B), we show that $\varphi_t\colon x\mapsto t\cdot x$ is an order-isomorphism between $(\epsilon\bullet t)^{\downarrow}$ and $(t*\epsilon)^{\downarrow}$. If $t\cdot x$ is defined, we have $t\cdot x\leq t*\epsilon$ as $x\leq \epsilon$ and $*$ is order-preserving by~\eqref{eq:z1}. Assume that $x\leq t*\epsilon$. Then $x=t*(x\bullet t)$ by~\eqref{eq:z2} and thus $x=t\cdot(x\bullet t)$ since $x\bullet t\leq \epsilon\bullet t$. It follows that $x\in {\mathrm{ran}}(\varphi_t)$, so that ${\mathrm{ran}}(\varphi_t)=(t*\epsilon)^{\downarrow}$. Assume that $t\cdot x\leq t\cdot y$. As before, we have $x=(t\cdot x)\bullet t$ and $y=(t\cdot y)\bullet t$. Hence $x\leq y$ as $\bullet$ is order-preserving by~\eqref{eq:z1}. Since $t\cdot (\epsilon\bullet t)$ is defined for $t\in T$, (C) also holds. We are left to verify that $\cdot$ is a partially defined action. Assume that $ts\cdot y$ is defined.  Since
$y\leq \epsilon\bullet ts=(\epsilon \bullet t)\bullet s\leq \epsilon \bullet s$, the element $s\cdot y$ is defined. Then $t\cdot(s\cdot y)$ is defined, too, because $\cdot$ is strong.

(2)  Let $\circ$ be the right partial action converse to $\cdot$. Observe that $x\circ t$ is defined if and only if $x\leq t*\epsilon$, which implies that $M(T,Y)=Y*_mT$ as sets. It is immediate that their unary operations and the identity elements coincide. We verify that the multiplications coincide, too. For this, we verify that $
x\wedge s*y=s*(x\bullet s\wedge y)$ whenever $x\leq s*\epsilon$:
\begin{align*}
s*(x\bullet s\wedge y) & =s*(x\bullet s)\wedge s*y & \text{by } \eqref{eq:z1}\\
& = x\wedge s*\epsilon\wedge s*y & \text{by } \eqref{eq:z2}\\
& = x\wedge s*y &  \text{since } x\leq s*\epsilon.
\end{align*}
 \end{proof}

In the opposite direction, let $\cdot$ be a partially defined left action  of a monoid $T$ on a semilattice $Y$ satisfying axioms (A), (B), (C) such that the monoid $M(T,Y)$ is ultra $F$-restriction. We aim to construct a double action of $T$ on $Y$. Let $\circ$ be the right partially defined action converse to $\cdot$.  For $t\in T$ we let $d_t, r_t\in Y$ be the top elements of  ${\mathrm{dom}}(\varphi_t)$ and ${\mathrm{ran}}(\varphi_t)$, respectively. 

For each $t\in T$ and $y\in Y$ we set
\begin{equation}\label{eq:def}
t*y=t\cdot (y\wedge d_t), \,\, y\bullet t=(y\wedge r_t)\circ t.
\end{equation}

\begin{proposition}\mbox{}\label{lem:assign}
\begin{enumerate}
\item The maps $*$ and $\bullet$ form a double action of $T$ on $Y$. Consequently, we can form the monoid $Y*_mT$.
\item $Y*_mT$ and $M(T,Y)$ are equal as $(2,1,1,0)$-algebras.
\end{enumerate}
\end{proposition}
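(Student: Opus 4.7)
The plan is to systematically verify that $*$ and $\bullet$ satisfy the double-action axioms (equations \eqref{eq:z1} and \eqref{eq:z2}), and then match $Y *_m T$ with $M(T,Y)$ coordinate-wise on the multiplication, the unary operations, and the identity. Throughout, I will exploit two things: first, that $\cdot$ and $\circ$ preserve the meet on their domains (Proposition 4.1 of \cite{CG}, as cited in the paper); second, that since $M(T,Y)$ is $F$-restriction, it is a monoid whose identity is the top projection, so $Y$ has a maximum element $\epsilon$ with $\epsilon = d_1 = r_1$, and $\varphi_t$ is an order-isomorphism $d_t^{\downarrow}\to r_t^{\downarrow}$ sending $d_t$ to $r_t$. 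In particular, $t*\epsilon = t\cdot d_t = r_t$ and dually $\epsilon\bullet t = d_t$.

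For part (1), I would first check the meet-preservation identities \eqref{eq:z1}. Distributing $(x\wedge y)\wedge d_t = (x\wedge d_t)\wedge(y\wedge d_t)$ and then applying the meet-preservation of $\cdot$ on $d_t^{\downarrow}$ yields $t*(x\wedge y) = t*x\wedge t*y$; the corresponding statement for $\bullet$ is symmetric. To show $*$ is a left action, the axiom $1*y=y$ is immediate from $d_1=\epsilon$. For the associativity $s*(t*y)=(st)*y$, the key preliminary is the identification $d_{st} = (d_s\wedge r_t)\circ t$, which follows from the partially defined action property: $(st)\cdot y$ is defined precisely when $y\leq d_t$ and $t\cdot y\leq d_s$. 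Then the computation reduces to showing $(t\cdot(y\wedge d_t))\wedge d_s = t\cdot(y\wedge d_{st})$, which is verified by applying $\circ t$ to both sides (both lie below $r_t$) and simplifying via $y\wedge d_{st}\leq d_t$. The right action axiom for $\bullet$ is dual. For the mixed identities \eqref{eq:z2}: since $t\cdot(x\wedge d_t)\leq r_t$, the meet with $r_t$ in the definition of $\bullet$ is redundant, so $(t*x)\bullet t = (t\cdot(x\wedge d_t))\circ t = x\wedge d_t = x\wedge (\epsilon\bullet t)$; the other identity is symmetric.

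For part (2), I would first observe that the underlying sets coincide: $(y,t)\in M(T,Y)$ iff $y\circ t$ is defined iff $y\leq r_t$ iff $y\leq t*\epsilon$, which is exactly the defining condition of $Y *_m T$ in \eqref{eq:z3}. The identity element $(\epsilon,1)$ is the same on both sides. For the unary operations: in $Y *_m T$ we have $(y,t)^* = (y\bullet t,1) = ((y\wedge r_t)\circ t,1) = (y\circ t,1)$ since $y\leq r_t$, agreeing with $M(T,Y)$; the operation $^+$ is literally the same. For multiplication, I need to verify that $x\wedge s*y = s\cdot((x\circ s)\wedge y)$ whenever $x\leq r_s$. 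The right-hand side lies in $r_s^{\downarrow}$, and applying $\circ s$ to the left-hand side gives $(x\circ s)\wedge((y\wedge d_s)) = (x\circ s)\wedge y$ (using $x\circ s\leq d_s$); since $\varphi_s$ is injective on $d_s^{\downarrow}$, the two elements are equal.

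The main obstacle will be the verification that $*$ is an honest left action: tracking the domain conditions through the composition requires the precise description $d_{st} = (d_s\wedge r_t)\circ t$, and the meet-preservation of $\cdot$ and $\circ$ must be deployed carefully to rewrite meets before and after applying the partial maps. Once this bookkeeping is done, the remaining double-action axioms and the coincidence with $M(T,Y)$ follow by short direct computations.
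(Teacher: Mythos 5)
Your proposal is correct and follows essentially the same route as the paper's proof: reduce associativity of $*$ to the single identity $t\cdot(y\wedge d_{st})=t\cdot(y\wedge d_t)\wedge d_s$, verify \eqref{eq:z1} and \eqref{eq:z2} by short meet computations using $\epsilon\bullet t=d_t$ and $t*\epsilon=r_t$, and then match sets, unary operations and multiplication coordinate-wise. The only cosmetic difference is that you establish the key identity via the explicit formula $d_{st}=(d_s\wedge r_t)\circ t$ together with injectivity and meet-preservation of $\varphi_t$, whereas the paper proves the two inequalities directly without naming $d_{st}$ explicitly.
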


\begin{proof} (1) First, we verify that $*$ is an action. Let $t,s\in T$ and $y\in Y$. To show that $ts*y=t*(s*y)$ we need to verify that
$$
ts\cdot (y\wedge d_{ts})=t\cdot (s\cdot (y\wedge d_s)\wedge d_t).
$$
By (PDA) $s\cdot (y\wedge d_{ts})$ and $t\cdot (s\cdot (y\wedge d_{ts}))$ are defined and the left hand side of the above equality equals $t\cdot (s\cdot (y\wedge d_{ts}))$. So the needed equality is equivalent to the equality
$$
s\cdot (y\wedge d_{ts})=s\cdot (y\wedge d_s)\wedge d_t.
$$
Denote the left hand side of the above equality by $A$ and the right hand side by $B$. Since $t\cdot A$ is defined, we have $A\leq d_t$. Further, since $ts\cdot d_{ts}$ is defined and $ts\cdot d_{ts}=t\cdot (s\cdot d_{ts})$, we obtain $d_{ts}\leq d_s$. Then $A\leq s\cdot  (y\wedge d_s)$, so that we have proved the inequality $A\leq B$.

To prove that $B\leq A$, we let $x=B\circ s$. Since $B\leq d_t$, it follows that $t\cdot (s\cdot x)$ is defined, so that 
$ts\cdot x$ is defined which implies $x\leq d_{ts}$. Since $s\cdot x=B\leq s\cdot (y\wedge d_s)$, it follows that
$x\leq y\wedge d_s$. Therefore,  $x\leq d_{ts}\wedge y\wedge d_s=y\wedge d_{ts}$, whence
$B=s\cdot x\leq s\cdot (y\wedge d_{ts})=A$, as required. That $\bullet$ is an action is established similarly. 

Let $t\in T$ and $x,y\in Y$.  The first equality in \eqref{eq:z1} holds:
$$t*(x\wedge y)=t\cdot(x\wedge y\wedge d_t)=t\cdot ((x\wedge d_t)\wedge (y\wedge d_t))=t\cdot (x\wedge d_t)\wedge t\cdot (y\wedge d_t)=t*x\wedge t*y.
$$ 
The second equality is verified similarly.

For the first equality in \eqref{eq:z2} we calculate:
\begin{align*}
(t*x)\bullet t & = (r_t\wedge t\cdot (d_t\wedge x))\circ t &\\
& = (t\cdot (d_t\wedge x))\circ t & \text{since } t\cdot (d_t\wedge x)\leq r_t\\
& = d_t\wedge x;\\
(\epsilon\bullet t)\wedge x& =(\epsilon\wedge r_t)\circ t\wedge x=d_t\wedge x.
\end{align*}

The second equality is verified similarly.

(2) It is immediate that $Y*_mT$ and $M(T,Y)$ are equal as sets and that their unary operations and identities coincide. We only verify that the multiplication in $Y*_mT$ coincides with that in $M(T,Y)$. This reduces to verifying that
$x\wedge s\cdot (y\wedge d_s)=s\cdot (x\circ s\wedge y)$ whenever $x\circ s$ is defined: 
\begin{align*}
x\wedge s\cdot (y\wedge d_s)& = s\cdot (x\circ s)\wedge s\cdot (y\wedge d_s) & \\
& = s\cdot (x\circ s\wedge y\wedge d_s) & \\
& = s\cdot (x\circ s\wedge y) & \text{since }x\circ s\leq d_s.
\end{align*}
\end{proof}

\begin{remark}{\em Let $M(T,Y)$ be an ultra $F$-restriction monoid defined by a left partially defined action $\cdot$ of $T$ on $Y$ and let $\circ$ be the right partially defined action converse to $\cdot$. Let, further, $*$ and $\bullet$ be the actions defined in \eqref{eq:def}.  For every $t\in T$ define the maps
\begin{align*} & \varphi_t: d_t^{\downarrow}\to Y,  \,\, x\mapsto t\cdot x;  \,\,  \psi_t: r_t^{\downarrow}\to Y,  \,\,x\mapsto x\circ t;\\
& \tilde{\varphi}_t: Y\to r_t^{\downarrow},  \,\, x\mapsto t*x;  \,\, \tilde{\psi}_t: Y \to d_t^{\downarrow}, \,\,  x\mapsto x\bullet t.
\end{align*}
For every $x\leq d_t$ and $y\in Y$ we then have:
$$
\varphi_t(x)\leq y \text{ if and only if } x\leq  \tilde{\psi}_t(y).
$$
Indeed, $t\cdot x\leq y$ is equivalent to $t\cdot x\leq y\wedge r_t$, which is in turn equivalent to 
$x\leq (y\wedge r_t)\circ t$.
Similarly for every $x\leq r_t$ and $y\in Y$:
$$
\psi_t(x)\leq y \text{ if and only if } x \leq  \tilde{\varphi}_t(y).
$$
This means that the map $\varphi_t$ is a left adjoint to the map $\tilde{\psi}_t$ and the map $\psi_t$ is a left adjoint to the map $\tilde{\varphi}_t$.}
\end{remark}

\begin{remark}\label{rem:link} {\em Using the results of this section, we can present the monoid $M(T,E)$ from Section \ref{sub:ultra_f_cover} in the form $E*_m T$. It is then easy to verify that it coincides (after a change in notation) with the covering monoid considered in the proof of Theorem 7.1 of~\cite{FGG}.} 
\end{remark}

\section{Globalization of a  strong partial action}\label{s:glob} 
Let $T$ be a monoid and $\cdot$ a strong  left partial action of $T$ on a semilattice $Y$ satisfying axioms (A), (B) and (C). In this section we construct a globalization  $*$ of  $\cdot$.
The construction is based on a combination of ideas to be found in \cite{MS, M}. Let $\circ$ be the right partial action converse to $\cdot$.

For $(x,s), (y,t) \in Y\times T$ we set $(x,s) \to (y,t)$ if there is $p\in T$ such that $s=tp$ and   $p\cdot x=y$. 
So we have:
$$
(x,tp) \to (p\cdot x,t); \,\,\, (x,t) \leftarrow (x\circ p, tp)
$$
whenever $p\cdot x$ or $x\circ p$ is defined.

Let $\sim$ be the minimum equivalence relation on $Y\times T$, which contains the relation $\to$. In other words,  $\sim$ is the transitive closure of $\to\cup \leftarrow$. For $A,B\in (Y\times T)/\sim$ we set $A\geq B$ if there are $(x,s)\in A$ and $(y,s)\in B$ such that $x\geq y$.

\begin{lemma}\label{lem:preorder}\mbox{}
\begin{enumerate}
\item \label{i:c1} If $A\geq B$ and $(z,t)\in A$ then there is $(u,t)\in B$ where $z\geq u$.
\item \label{i:c2} The relation $\geq$ is a preorder on $(Y\times T)/\sim$.
\end{enumerate}
\end{lemma}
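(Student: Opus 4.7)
The plan is to reduce both parts to a single transport claim: \emph{if $(x,s) \sim (z,t)$ in $Y \times T$ and $y \le x$ in $Y$, then there exists $u \le z$ in $Y$ with $(y,s) \sim (u,t)$.} Given this claim, part~(1) is a one-line deduction. Namely, $A \ge B$ supplies witnesses $(x,s) \in A$ and $(y,s) \in B$ with $x \ge y$; since $(z,t) \in A$ forces $(x,s) \sim (z,t)$, the claim produces $u \le z$ with $(y,s) \sim (u,t)$, and then $(u,t)\in B$ as required.

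To establish the transport claim I would induct on the length of a $\to/\leftarrow$ chain connecting $(x,s)$ to $(z,t)$, which reduces matters to two single-step cases. If $(x,s) \to (z,t)$ via $s = tp$ and $z = p \cdot x$, then since $p \cdot x$ is defined and $\mathrm{dom}(\varphi_p)$ is an order ideal by~(A), the element $u := p \cdot y$ is defined, and~(B) gives $u \le p \cdot x = z$, whence $(y,s) \to (u,t)$. The reverse step is the place I expect the main subtlety to sit: if $(x,s) \leftarrow (z,t)$ via $t = sp$ and $x = p \cdot z$, then $y \le x = p \cdot z$ lies in $\mathrm{ran}(\varphi_p)$ by~(A), so $u := y \circ p$ is defined with $p \cdot u = y$; applying $\varphi_p^{-1}$, which is order-preserving by~(B), to the inequality $y \le p \cdot z$ in $\mathrm{ran}(\varphi_p)$ gives $u = y \circ p \le (p \cdot z) \circ p = z$. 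Hence $(u,t) \to (y,s)$, i.e., $(y,s) \leftarrow (u,t)$, and iterating along the chain finishes the claim.

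Part~(2) is then short. Reflexivity follows from $x \ge x$ on any representative $(x,s) \in A$. For transitivity, suppose $A \ge B$ and $B \ge C$, with the first relation witnessed by $(x_1,s_1)\in A$ and $(y_1,s_1)\in B$ satisfying $x_1 \ge y_1$. Applying part~(1) to $B \ge C$ at the element $(y_1,s_1)\in B$ yields $(u,s_1)\in C$ with $y_1 \ge u$, and then $(x_1,s_1)\in A$ together with $(u,s_1)\in C$ and $x_1 \ge y_1 \ge u$ witness $A \ge C$.
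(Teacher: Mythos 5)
Your proposal is correct and follows essentially the same route as the paper: an induction along the $\to/\leftarrow$ chain connecting the two representatives, using axiom (A) (domains and ranges are order ideals) to transport the smaller element across each single step and axiom (B) to preserve the inequality, with part (2) then following formally from part (1).
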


\begin{proof} (1) Let $(x,s)\in A$ and $(y,s)\in B$ be such that $x\geq y$.  Since $(x,s)\sim (z,t)$, there is a sequence  $(x,s)=(x_0,s_0), (x_1,s_1), \dots, (x_n,s_n)=(z,t)$ in $Y\times T$ such that either $(x_i,s_i)\to (x_{i+1},s_{i+1})$ or  $(x_{i+1},s_{i+1})\to (x_i,s_i)$ for all admissible $i$. Assume that $(x,s)\to (x_{1},s_{1})$. Then there is a factorization $s=s_1q$ such that $x_1=q\cdot x$. It follows that $q\cdot y$ is defined and $(y,s)\to (q\cdot y,s_1)$. We put $y_1=q\cdot y$ and note that $x_1\geq y_1$. Let now $(x_1,s_1)\to (x,s)$. Then there is a factorization $s_1=sp$ such that $x_1=x\circ p$.  Similarly as before, we get that $y\circ p$ is defined and $(y\circ p, sp)\to (y,s)$. We put $y_1=y\circ p$ and note that $y_1\leq x_1$. The statement now follows by induction.

(2) Reflexivity of $\geq$ is obvious and transitivity follows from (1).
\end{proof}

\begin{lemma} \label{lem:ms} Let $(x,s)\sim (y,t)$. Then either both $s\cdot x$ and $t\cdot y$ are defined, in which case $s\cdot x=t\cdot y$, or they are both undefined.
\end{lemma}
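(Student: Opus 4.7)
My plan is to prove the claim by induction on the length of a chain witnessing $(x,s) \sim (y,t)$, so the real content lies in verifying the statement for a single elementary step $(x,s) \to (y,t)$. By definition of $\to$ this means there exists $p \in T$ with $s = tp$ and $y = p \cdot x$. Suppose $s \cdot x = (tp) \cdot x$ is defined; since $p \cdot x$ is also defined (as $y = p \cdot x$ exists), strongness condition~(S), applied with parameters $t$, $p$, $x$, yields that $t \cdot (p \cdot x) = t \cdot y$ is defined, and the equality $t \cdot y = (tp) \cdot x = s \cdot x$ then follows from~(LP2). Conversely, if $t \cdot y = t \cdot (p \cdot x)$ is defined, then (LP2) applied directly gives that $(tp) \cdot x = s \cdot x$ is defined and equal to $t \cdot y$. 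This handles the arrow $\to$, and the reverse arrow $\leftarrow$ is dealt with by the same argument simply by swapping the roles of $(x,s)$ and $(y,t)$.

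To conclude, let $(x,s) = (x_0,s_0), (x_1,s_1), \dots, (x_n,s_n) = (y,t)$ be a chain in $Y \times T$ in which each consecutive pair is related by $\to$ or $\leftarrow$. By the single-step analysis above, the property ``$s_i \cdot x_i$ is defined'' together with its value, propagates along each link of the chain; an immediate induction on $n$ shows that $s \cdot x$ and $t \cdot y$ are simultaneously defined or simultaneously undefined, and that they are equal when defined.

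The argument is really a direct unwinding of the strongness axiom together with (LP2), so I do not expect a genuine obstacle; the only point requiring care is to formulate the one-step invariant in a symmetric fashion so that it passes equally along $\to$ and $\leftarrow$ edges. Note that condition~(C) plays no role here, and (A), (B) are not needed either: the lemma is a statement about the raw partial action, not about its interaction with the semilattice order.
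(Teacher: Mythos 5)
Your proposal is correct and follows essentially the same route as the paper: reduce to a single $\to$ step written as $(x,tp)\to(p\cdot x,t)$, apply strongness (S) in one direction and (LP2) in the other, then induct along the chain. The paper's version is just terser, leaving the (S)/(LP2) case split implicit.
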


\begin{proof} It is enough to consider only the case where $(x,s)\to (y,t)$, since the other case then holds by symmetry and the statement follows by induction. 
Rewriting $(x,s)\to (y,t)$ as $(x,tp)\to (p\cdot x, t)$,  we see that the claim holds since $\cdot$ is strong. 
\end{proof}

\begin{lemma}\label{lem:class} Let $A,B\in (Y\times T)/\sim$ be such that $A\neq B$, $A\leq B$ and $B\leq A$. If $(x,s)\in A$ then $s\cdot x$ is undefined. Consequently, no element of the form $(x,1)$ belongs to $A$.
\end{lemma}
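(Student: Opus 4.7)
The plan is to argue by contradiction: assume $s\cdot x$ is defined and derive $A=B$. Starting from $(x,s)\in A$ together with the hypotheses $A\geq B$ and $B\geq A$, I would apply Lemma \ref{lem:preorder}\eqref{i:c1} twice: first to obtain $(u_1,s)\in B$ with $x\geq u_1$, and then, applied to $(u_1,s)\in B$, to obtain $(u_2,s)\in A$ with $u_1\geq u_2$. Thus I have two elements of $A$, namely $(x,s)$ and $(u_2,s)$, that are $\sim$-equivalent, sandwiching a $B$-element $(u_1,s)$ in the semilattice order.

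Next I would use the defining assumption and the properties of $\cdot$ to pin down the images under $\varphi_s$. Since $(x,s)\sim(u_2,s)$ and $s\cdot x$ is defined, Lemma \ref{lem:ms} yields that $s\cdot u_2$ is also defined and $s\cdot u_2=s\cdot x$. Axiom (A) (${\mathrm{dom}}(\varphi_s)$ is an order ideal) applied to $u_1\leq x$ gives that $s\cdot u_1$ is defined as well, and axiom (B) (order-isomorphism) yields
\[
s\cdot x \;\geq\; s\cdot u_1 \;\geq\; s\cdot u_2 \;=\; s\cdot x,
\]
forcing $s\cdot u_1=s\cdot x$. Injectivity of $\varphi_s$ on its domain (again axiom (B)) then gives $u_1=x$. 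But $(u_1,s)\in B$ and $(x,s)\in A$, so the $\sim$-classes $A$ and $B$ share an element, hence $A=B$, contradicting the assumption $A\neq B$.

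For the consequence, if some $(x,1)\in A$, then by (LP1) the element $1\cdot x=x$ is automatically defined, directly contradicting what was just established with $s=1$. The only genuinely delicate point is justifying that each $s\cdot u_i$ is defined so that we may compare them in $Y$; but this is precisely what the order-ideal axiom (A) together with Lemma \ref{lem:ms} deliver. The rest is a short order-theoretic squeeze combined with injectivity of $\varphi_s$.
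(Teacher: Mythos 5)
Your proof is correct and follows essentially the same route as the paper: apply Lemma \ref{lem:preorder}\eqref{i:c1} twice to sandwich a $B$-element between two $A$-elements over the same $T$-coordinate, invoke Lemma \ref{lem:ms} to transfer definedness of $s\cdot(-)$ along the $\sim$-class, and use injectivity of $\varphi_s$ to force a contradiction with $A\neq B$. The only cosmetic difference is that the paper concludes directly via $x=(s\cdot x)\circ s=(s\cdot y)\circ s=y$ against the strict inequality $x\gneq y$, whereas you run the order-theoretic squeeze through the intermediate $B$-element using axioms (A) and (B); both are sound.
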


\begin{proof} Let $(x,s)\in A$. By  Lemma \ref{lem:preorder} and since $A\cap B=\varnothing$, there are $(z,s)\in B$ and $(y,s)\in A$ such that $x\gneq z\gneq y$. Assume that $s\cdot x$ is defined. Then by Lemma \ref{lem:ms} $s\cdot y$ is defined and $s\cdot x=s\cdot y$. It follows that $x= (s\cdot x)\circ s = (s\cdot y)\circ s =y$,
which is a contradiction.
\end{proof}

Let $\approx$ be the equivalence on $Y\times T$ given by $(x,s)\approx (y,t)$ if and only if 
$[x,s]_{\sim} \leq  [y,t]_{\sim}$ and $[y,t]_{\sim} \leq  [x,s]_{\sim}$, where $[x,s]_{\sim}$ denotes the $\sim$-class of $(x,s)$. The set $X=(Y\times T)/\approx$ is  partially ordered with the order induced by the preorder on $(Y\times T)/\sim$. By $[x,s]$ we will denote the $\approx$-class of $(x,s)$.
We let $\overline{Y}=\{[y,1]\colon y\in Y\}$.

\begin{lemma}\mbox{}\label{lem:l:a}
\begin{enumerate}
\item \label{i:d1}The map $\theta: y\mapsto [y,1]$ is an order-isomorphism between $Y$ and $\overline{Y}$.
\item \label{i:d2} $\overline{Y}$ is an order ideal of $X$.
\item \label{i:d3} $\overline{Y}$ is a meet semilattice under the induced order on $\overline{Y}$ and $Y$ is isomorphic to $\overline{Y}$ as a meet semilattice via $\theta$. \end{enumerate} 
\end{lemma}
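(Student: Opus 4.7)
The plan is to derive all three parts from two key facts established earlier: Lemma \ref{lem:ms} (which controls $\sim$-classes when a coordinate is in the defined region of $\cdot$) and Lemma \ref{lem:class} (which forbids $(x,1)$-type pairs from appearing in a $\approx$-collapsing class), together with Lemma \ref{lem:preorder}(\ref{i:c1}), which lets us transport the second coordinate across the preorder.

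The first step is to pin down the key \emph{injectivity-type lemma}: if $(y,1)\sim (z,1)$ then $y=z$. This is immediate from Lemma \ref{lem:ms}, since $1\cdot y=y$ and $1\cdot z=z$ are both defined, forcing $y=z$. Using this, I prove (\ref{i:d1}). Surjectivity of $\theta$ onto $\overline{Y}$ is by definition. For injectivity of $\theta$, suppose $[y,1]=[z,1]$, so $(y,1)\approx (z,1)$; either $(y,1)\sim (z,1)$ and the key lemma gives $y=z$, or the two $\sim$-classes are distinct but mutually $\leq$-related, which contradicts Lemma \ref{lem:class} since each class contains an element with second coordinate $1$. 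For order-preservation, $y\leq z$ in $Y$ gives $(y,1)\leq (z,1)$ with equal second coordinates, so $[y,1]_\sim\leq [z,1]_\sim$. For order-reflection, $[y,1]\leq [z,1]$ gives $[y,1]_\sim\leq [z,1]_\sim$; applying Lemma \ref{lem:preorder}(\ref{i:c1}) with the representative $(z,1)$ yields some $(u,1)\in [y,1]_\sim$ with $z\geq u$, and the key lemma forces $u=y$, hence $z\geq y$.

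The second step handles (\ref{i:d2}). Given $[x,s]\leq [y,1]$ with $y\in Y$, apply Lemma \ref{lem:preorder}(\ref{i:c1}) to the representative $(y,1)\in [y,1]_\sim$: there exists $(u,1)\in [x,s]_\sim$ with $y\geq u$. Then $[x,s]_\sim=[u,1]_\sim$, and since these $\sim$-classes coincide we have $[x,s]=[u,1]=\theta(u)\in \overline{Y}$. So any element of $X$ lying below a member of $\overline{Y}$ is itself in $\overline{Y}$.

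The last step, (\ref{i:d3}), is then essentially formal: having an order-isomorphism $\theta$ from the meet semilattice $Y$ onto the poset $\overline{Y}$ transports the meet operation, so $\overline{Y}$ is a meet semilattice and $[y,1]\wedge [z,1]=[y\wedge z,1]=\theta(y\wedge z)$ for all $y,z\in Y$, making $\theta$ a meet-semilattice isomorphism. I expect the main conceptual obstacle to be the injectivity argument in (\ref{i:d1}): one must carefully split into the $\sim$-case (handled by Lemma \ref{lem:ms}) and the genuinely $\approx$-collapsing case (ruled out by Lemma \ref{lem:class}); the rest of the proof is a straightforward bookkeeping exercise with the preorder.
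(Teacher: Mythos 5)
Your proof is correct and follows essentially the same route as the paper: Lemma \ref{lem:preorder}(\ref{i:c1}) to transport representatives with fixed second coordinate, Lemma \ref{lem:ms} to identify $(y,1)\sim(z,1)$ with $y=z$, and the formal transport of meets along $\theta$ for part (\ref{i:d3}). Your separate injectivity argument via Lemma \ref{lem:class} is sound but redundant, since injectivity already follows from the order-reflection you establish (mutual inequalities $y\leq z$ and $z\leq y$ force $y=z$), which is how the paper implicitly handles it.
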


\begin{proof} \eqref{i:d1} Clearly, $\theta$ is surjective. Let $x,y\in Y$.  If $x\leq y$, we have $[x,1]_{\sim}\leq [y,1]_{\sim}$ by the definition of the order on $(Y\times T)/\sim$, so that $[x,1] \leq [y,1]$. Assume that $[x,1] \leq [y,1]$. Then  $[x,1]_{\sim}\leq [y,1]_{\sim}$ and Lemma \ref{lem:preorder} implies that $[x,1]_{\sim}=[z,1]_{\sim}$ for some $z\leq y$. By Lemma \ref{lem:ms} we obtain $x=1\cdot x = 1\cdot z=z$ yielding $x\leq y$.

\eqref{i:d2} Assume that $[x,s]\leq [y,1]$. Then $[x,s]_{\sim}\leq [y,1]_{\sim}$. By Lemma \ref{lem:preorder} we then obtain that $(x,s)\sim (z,1)$ for some $z\leq y$, which implies $(x,s)\approx (z,1)$ and consequently $[x,s]\in \overline{Y}$.

\eqref{i:d3} follows from (1) and (2).
\end{proof}

\begin{lemma}\label{lem:action} Let $[x,s]=[y,p]$. Then $[x,ts]=[y,tp]$ for any $t\in T$.
\end{lemma}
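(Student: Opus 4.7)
My plan is to first establish the key auxiliary fact that left-multiplication by $t$ respects the equivalence $\sim$, and then bootstrap this to the preorder $\geq$ on $\sim$-classes, which will yield preservation of $\approx$.

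For the auxiliary fact, observe that if $(x,s)\to(y,r)$, then by definition there is $p\in T$ with $s=rp$ and $y=p\cdot x$. But then $ts=trp$ and $y=p\cdot x$ still hold, which is precisely the condition $(x,ts)\to(y,tr)$. Thus $\to$ is preserved under left multiplication by $t$ in the second coordinate, hence so is $\leftarrow$, and by an immediate induction on the length of a zig-zag chain, $(x,s)\sim(y,p)$ implies $(x,ts)\sim(y,tp)$.

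Next I would lift this from $\sim$ to the preorder $\geq$ on $(Y\times T)/\sim$. Suppose $[x,s]_\sim\leq[y,p]_\sim$. By definition there exist representatives $(x',r)\in[x,s]_\sim$ and $(y',r)\in[y,p]_\sim$ with $y'\geq x'$. Applying the previous step, $(x,ts)\sim(x',tr)$ and $(y,tp)\sim(y',tr)$, so the pair $(x',tr),(y',tr)$ witnesses $[x,ts]_\sim\leq[y,tp]_\sim$. Hence left multiplication by $t$ is monotone with respect to $\geq$.

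Finally, by the definition of $\approx$, the equality $[x,s]=[y,p]$ is equivalent to the conjunction $[x,s]_\sim\leq[y,p]_\sim$ and $[y,p]_\sim\leq[x,s]_\sim$. Applying the monotonicity established above in both directions gives $[x,ts]_\sim\leq[y,tp]_\sim$ and $[y,tp]_\sim\leq[x,ts]_\sim$, which is exactly $[x,ts]=[y,tp]$. I do not anticipate a serious obstacle here; the only subtlety is to notice that one should not try to show $(x,s)\sim(y,p)$ directly (which is false in general when $[x,s]=[y,p]$ but the $\sim$-classes differ), but instead argue through the preorder on representatives having a common second coordinate.
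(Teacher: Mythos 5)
Your proof is correct and follows essentially the same route as the paper: the key observation in both is that the relation $\to$ is preserved under left multiplication by $t$ in the second coordinate, which by induction along a zig-zag chain shows $\sim$ is preserved, and this is then lifted to the preorder $\geq$ (the paper invokes Lemma~\ref{lem:preorder}(1) to place the witnesses at the second coordinate $p$, whereas you use the raw definition of $\geq$ with an arbitrary common second coordinate --- an immaterial difference). Your closing caveat about not attempting to prove $(x,s)\sim(y,p)$ directly is well taken and matches the paper's handling of the two inequalities separately.
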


\begin{proof} Since $[x,s]_{\sim}\leq [y,p]_{\sim}$,  we have $[x,s]_{\sim} = [z,p]_{\sim}$ for some $z\leq y$ by Lemma \ref{lem:preorder}. We show that $[x,ts]_{\sim} = [z,tp]_{\sim}$. Assume that
$(x,s)\to (z,p)$. This can be rewritten as $(x,pq)\to (q\cdot x, p)$. But then $(x,tpq) \to (q\cdot x, tp)$, which means that $(x,ts)\to (z,tp)$.  The claim that $[x,ts]_{\sim} = [z,tp]_{\sim}$ now easily follows by induction. Therefore, $[x,ts]_{\sim}\leq [y,tp]_{\sim}$. The opposite inequality can be proved similarly, so that $[x,ts]=[y,tp]$, as required. \end{proof}

Let $t\in T$ and $[y,s]\in X$. We set $t*[y,s] = [y,ts]$. By the preceding lemma this is well defined and thus is an order-preserving left action of $T$ on $X$. 

\begin{lemma}\mbox{}\label{lem:l:b}
\begin{enumerate}
\item \label{i:f1}The induced left partial action of $T$ on $\overline{Y}$ is isomorphic to the left partial action $\cdot$ of $T$ on $Y$.
\item \label{i:f2} If $[x,s] \leq t*[y,1]$ then $[x,s]=t*[z,1]$ for some $z\leq y$.
\end{enumerate}
\end{lemma}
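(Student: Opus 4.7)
The plan is to unpack the definitions of $\sim$, $\approx$, $*$, and the preorder on $(Y\times T)/\sim$, and then apply Lemmas~\ref{lem:preorder}, \ref{lem:ms}, and \ref{lem:class} directly. For part~\eqref{i:f1}, the induced partial action $\star$ on $\overline{Y}$ is defined by: $t\star [y,1]$ exists iff $t*[y,1] = [y,t]\in \overline{Y}$. I will take $\theta\colon Y\to \overline{Y}$ from Lemma~\ref{lem:l:a}(\ref{i:d1}) and show that $t\star\theta(y)$ is defined exactly when $t\cdot y$ is defined, with $t\star\theta(y)=\theta(t\cdot y)$ in that case.

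The forward direction is immediate: if $t\cdot y$ is defined, then writing $(y,t)=(y,t\cdot 1)$ and applying the defining relation $\to$ one step, we get $(y,t)\to (t\cdot y, 1)$, hence $[y,t]=[t\cdot y,1]\in \overline{Y}$, which gives $t*\theta(y)=\theta(t\cdot y)$. For the reverse direction, suppose $[y,t]=[z,1]$ for some $z\in Y$. I split on whether $(y,t)$ and $(z,1)$ already lie in the same $\sim$-class. If yes, Lemma~\ref{lem:ms} applied with the (trivially defined) $1\cdot z=z$ forces $t\cdot y$ to be defined and equal to $z$. If no, then $[y,t]_\sim$ and $[z,1]_\sim$ are distinct $\sim$-classes which are mutually $\leq$-related (so that they merge under $\approx$); but Lemma~\ref{lem:class} forbids any element of the form $(\cdot,1)$ from lying in such a class, contradicting $(z,1)\in[z,1]_\sim$. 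So only the first case occurs.

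Part~\eqref{i:f2} is a quick application of Lemma~\ref{lem:preorder}(\ref{i:c1}). From $[x,s]\leq t*[y,1]=[y,t]$ in $X$ I obtain $[x,s]_\sim \leq [y,t]_\sim$ in the preorder on $\sim$-classes. Picking the representative $(y,t)$ of the larger class and applying Lemma~\ref{lem:preorder}(\ref{i:c1}), I get $(u,t)\in [x,s]_\sim$ with $u\leq y$. Since $(x,s)\sim (u,t)$ already forces $[x,s]=[u,t]$ as $\approx$-classes, I have $[x,s]=[u,t]=t*[u,1]$, so $z:=u$ does the job.

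The only nontrivial point is the reverse direction of~\eqref{i:f1}, where one has to worry that $\overline{Y}$ might contain spurious elements arising from the merger of distinct $\sim$-classes under~$\approx$. Lemma~\ref{lem:class} is exactly the tool needed to rule this out: it guarantees that any $\sim$-class containing an element of the form $(z,1)$ is already a full $\approx$-class, so $[y,t]=[z,1]$ really does entail $(y,t)\sim (z,1)$, after which Lemma~\ref{lem:ms} finishes the argument.
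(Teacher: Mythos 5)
Your proof is correct and follows essentially the same route as the paper: part~\eqref{i:f1} is exactly the paper's combination of Lemma~\ref{lem:ms} (for the case $(y,t)\sim(z,1)$) with Lemma~\ref{lem:class} (to exclude a merger of distinct $\sim$-classes containing an element of the form $(z,1)$), and part~\eqref{i:f2} is the paper's one-line appeal to Lemma~\ref{lem:preorder} and the definition of $*$. You have merely written out the details more explicitly; nothing is missing.
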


\begin{proof}  \eqref{i:f1} Let $t\in T$ and $y\in Y$. By Lemmas \ref{lem:ms} and \ref{lem:class} we have that $[y,t]=[z,1]$ for some $z$ if and only if $t\cdot y$ is defined.  Assume that $t\cdot y$ is defined. Then  $t*[y,1]=[y,t]=[t\cdot y, 1]$. Assume that $t\cdot y$ is undefined. Then $t*[y,1]\not\in \overline{Y}$ and so the induced partial action is undefined on $[y,1]$.

\eqref{i:f2} follows from Lemma \ref{lem:preorder} and the definition of $*$. 
\end{proof}

\begin{proposition} \label{prop:mult} Let $S$ be a left extra proper restriction semigroup. Let $X$ be the poset and $*$ be the left action of $S/\sigma$ on $X$ obtained by applying the globalization construction to the strong left partial action $\cdot$ underlying $S$. Then for every $x,y\in \overline{P(S)}$ and $s\in S/\sigma$  the meet $x\wedge s*y$ exists in $X$ and the multiplication in the semigroup $M(S/\sigma, \overline{P(S)})$ can be expressed by the formula:
$$
(x,s)(y,t)=(x\wedge s*y,st).
$$ 
\end{proposition}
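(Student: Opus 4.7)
The plan is to reduce the proposition directly to Lemma~\ref{lem:products}, whose hypotheses the globalization construction from Section~\ref{s:glob} has been built to satisfy. With $T=S/\sigma$, the poset $X=(P(S)\times T)/{\approx}$, the global left action $*$ and the embedded subset $\overline{Y}=\overline{P(S)}$, I need to check four things: (i) $*$ is an order-preserving left action of $T$ on $X$; (ii) $\overline{Y}$ is an order ideal of $X$ and a meet semilattice under the induced order; (iii) the induced partial action of $T$ on $\overline{Y}$ satisfies axioms (A), (B), (C) so that $M(T,\overline{Y})$ can be formed; and (iv) the implication \eqref{eq:axiom1} holds for the triple $(*,X,\overline{Y})$.

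All four items have essentially been established already. For (i), the action axioms are immediate from $t*[y,s]=[y,ts]$ together with Lemma~\ref{lem:action} (which gave well-definedness); order-preservation follows in one line from Lemma~\ref{lem:preorder}\eqref{i:c1}, by picking representatives of $[y_1,s_1]\leq[y_2,s_2]$ with a common second coordinate and then pre-multiplying that coordinate by $t$. Item (ii) is Lemma~\ref{lem:l:a}\eqref{i:d2}--\eqref{i:d3}. Item (iii) follows from Lemma~\ref{lem:l:b}\eqref{i:f1}: the induced partial action on $\overline{Y}$ is isomorphic to $\cdot$ on $Y$, and (A), (B), (C) are preserved under isomorphism of partial actions. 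Item (iv) is Lemma~\ref{lem:l:b}\eqref{i:f2} verbatim.

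Once these are in hand, Lemma~\ref{lem:products} applies with $Y$ replaced by $\overline{Y}$ and $S$ replaced by $T$: for $x,y\in\overline{Y}$ and $s\in T$ such that $x\circ s$ is defined, the meet $x\wedge s*y$ exists in $X$ and equals $s\cdot((x\circ s)\wedge y)$. Since $(x,s)\in M(T,\overline{Y})$ precisely when $x\circ s$ is defined, the multiplication formula from Section~\ref{s:cg},
\[
(x,s)(y,t)=\bigl(s\cdot((x\circ s)\wedge y),\,st\bigr),
\]
rewrites as $(x,s)(y,t)=(x\wedge s*y,\,st)$, as claimed.

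There is no serious obstacle here; the conceptual work has been done in building the globalization and in proving Lemma~\ref{lem:products}. The only ingredient not spelled out earlier is the order-preservation of $*$, which, as noted, is a one-line consequence of Lemma~\ref{lem:preorder}\eqref{i:c1}.
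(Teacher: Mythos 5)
Your proposal is correct and follows exactly the paper's route: the paper's proof is the one-liner that Lemma~\ref{lem:l:b}\eqref{i:f2} verifies condition \eqref{eq:axiom1}, after which Lemma~\ref{lem:products} gives the result. You merely make explicit the remaining hypotheses of Lemma~\ref{lem:products} (order-preservation of $*$, the order-ideal and semilattice structure of $\overline{P(S)}$, and axioms (A), (B), (C) for the induced partial action), which the paper leaves implicit in the preceding lemmas of Section~\ref{s:glob}.
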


\begin{proof} By Lemma \ref{lem:l:b}\eqref{i:f2} condition \eqref{eq:axiom1} is satisfied, so the statement follows by Lemma~\ref{lem:products}.
\end{proof}

Proposition \ref{prop:mult} provides a globalized version of  Theorem \ref{th:CG} and may be considered an analogue of the McAlister $P$-theorem \cite{McA, Law} for left extra proper restriction semigroups. A similar statement can be formulated and proved for right extra proper restriction semigroups.

\section{Embedding of  an ultra $F$-restriction monoid $M(A^*,Y)$ into a $W$-product}\label{s:emb}

It is natural to look for conditions on the semigroup $M(T,Y)$, under which the poset $X$ constructed in  the previous section would be a semilattice and the action $*$ would have nice properties. 
In this section we show that this can be achieved if $T$ is a free monoid and $M(T,Y)$ is ultra $F$-restriction. To be precise, our result is formulated as follows:

\begin{theorem}\label{th:main} Let $T=A^*$ be the free $A$-generated monoid and assume that a left partially defined action $\cdot$ of $T$ on a semilattice $Y$ is given such that the semigroup $M(T,Y)$ can be formed and is an ultra $F$-restriction monoid. Let $X$ and $*$ be the poset and the left action of $T$ on $X$ constructed in Section \ref{s:glob}. Then
\begin{enumerate}
\item  \label{i:m1} $X$ is semilattice.
\item \label{i:m2}The $W$-product $W(T,X)$ may be formed.
\item \label{i:m3} $M(T,Y)$ embeds into $W(T,X)$.
\end{enumerate}
\end{theorem}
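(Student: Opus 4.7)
The plan is to build on the globalization construction from Section \ref{s:glob}. Since ultra $F$-restriction implies (left) extra proper, Proposition \ref{prop:mult} and all constructions of that section apply unchanged, producing the poset $(X, \leq)$ together with the order-preserving left action $*$ of $T$ on $X$ extending $\cdot$ on $\overline{Y}$. I then need to upgrade three things using the extra hypotheses: the poset $X$ to a meet semilattice, the action $*$ to one by order-embeddings with order-ideal ranges, and finally produce the embedding into $W(T, X)$.

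First I would analyse the $\sim$-classes more concretely. Because $T = A^*$ is free, the relations $(x, s) \to (y, t)$ are very rigid: they require $t$ to be a prefix of $s$ in the unique-factorization sense, and they correspond to either absorbing a suffix via $\cdot$ or extending by a suffix via $\circ$. Combining this with the ultra $F$-restriction hypothesis (principal order-ideal domains $d_s^{\downarrow}$), every class $[x, s]_{\sim}$ should admit a canonical ``fully reduced'' representative that cannot be further shortened by a move, and the freeness of $A^*$ should force this representative to be unique. I would deduce that the preorder on $(Y \times T)/{\sim}$ is already antisymmetric, so that $\approx$ coincides with $\sim$ and $X = (Y \times T)/{\sim}$.

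Next, the meat of the argument: constructing meets in $X$. Given $[x, s]$ and $[y, t]$, I would use the unique longest common prefix $w$ of $s$ and $t$ in $A^*$ to align the two classes. Writing $s = ws'$ and $t = wt'$, and exploiting the fact that ultra $F$-restriction plus freeness allows any representative to be prolonged along arbitrary suffixes (subject to the principal-ideal domain constraints), one produces representatives of both classes with a common second coordinate $r$, whose first coordinates lie in $Y$ and hence admit a semilattice meet. The candidate meet is then $[a \wedge b, r]$ for appropriate representatives $(a, r), (b, r)$, and I would verify it is indeed the greatest lower bound by applying Lemma \ref{lem:preorder} to an arbitrary common lower bound. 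Parallel arguments establish that each $t * {-}$ is an order-embedding — this reduces to left cancellativity of $\sim$ with respect to prefixing by $t$, which one proves by propagating the prefix through each move in a $\sim$-chain, again using freeness of $A^*$ — and that the range of $t*{-}$ is an order ideal, which extends Lemma \ref{lem:l:b}\eqref{i:f2} from $\overline{Y}$ to arbitrary $[y, u]$ by the same propagation argument.

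Finally I define $\iota \colon M(T, Y) \to W(T, X)$ by $\iota(y, t) = ([y, 1], t)$. This lies in $W(T, X)$ because, setting $z = [y \circ t, 1]$ (valid since $y \circ t$ is defined), Lemma \ref{lem:l:b}\eqref{i:f1} gives $t * z = [y \circ t, t] = [y, 1]$. Preservation of multiplication follows from Proposition \ref{prop:mult}:
\[
([x, 1], s)\,([y, 1], t) \;=\; \bigl([x, 1] \wedge s*[y, 1],\, st\bigr) \;=\; \bigl([\,s \cdot ((x \circ s) \wedge y)\,,\,1\,],\, st\bigr),
\]
which matches $\iota((x, s)(y, t))$. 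Preservation of $^*$ and $^+$ is a direct unwrapping of definitions, and injectivity is immediate from Lemma \ref{lem:l:a}\eqref{i:d1}. The principal obstacle is clearly the meet construction in the second step: the general globalization yields only a poset, and producing meets requires leveraging both the unique-factorization property of $A^*$ and the principal-ideal property of each $\varphi_t$'s domain simultaneously. The technical care lies in aligning representatives of two classes without violating the domain constraints governing when $\circ$-extensions are legal.
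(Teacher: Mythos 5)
Your overall architecture tracks the paper's: the uniqueness of canonical representatives and the collapse of $\approx$ to $\sim$, the order-ideal and order-embedding properties of each map $[x,w]\mapsto t*[x,w]$, and the final identification of $M(T,Y)$ with $M(T,\overline{Y})\subseteq M(T,X)=W(T,X)$ via Proposition \ref{prop:mult} all correspond to Lemmas \ref{lem:x} and \ref{lem:action_prop} and to \eqref{eq:aux20}, and your sketches of those parts are sound. The problem is the step you yourself flag as the meat of the argument: your recipe for the meet does not work. You propose to find representatives $(a,r)\in[x,s]_\sim$ and $(b,r)\in[y,t]_\sim$ with a \emph{common} second coordinate $r$ and to take $[a\wedge b,r]$. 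Such representatives need not exist. By Lemma \ref{lem:x}, every element of the class of a canonical pair $(x,w)$ has the form $(x\circ p,wp)$, so a move can only lengthen the second coordinate (and only when $x\circ p$ is defined) or strip a suffix $p$ for which $p\cdot x$ is \emph{actually} defined; hence if neither canonical second coordinate is a prefix of the other, the two classes have disjoint sets of second coordinates. Concretely, in $F\mathcal{RM}(A)=M(A^*,\mathcal{Y}')$ with distinct letters $a,b$ and the trivial ideal $B=\{1\}$, both $[B,a]_\sim$ and $[B,b]_\sim$ are singletons (no shortening since $a^{-1},b^{-1}\notin B$, no lengthening since $B\circ p$ is defined only for $p=1$), yet their meet exists in $X\simeq\mathcal{Q}'$ and equals $[\{1,a,b\},1]$; it is not of the form $[a\wedge b,r]$ for any representatives of the two classes.

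The missing idea is that the meet must be assembled from elements that are \emph{not} representatives of the given classes but of strictly smaller ones. Writing $k$ for the longest common prefix of $v$ and $u$, $v=kv'$, $u=ku'$, one first lowers $e$ to $e\wedge d_{v'}$ and $f$ to $f\wedge d_{u'}$ --- this is exactly where the ultra $F$-restriction hypothesis enters, since it guarantees the top elements $d_{v'},d_{u'}$ of the domains exist --- so that the actions of $v'$ and $u'$ become applicable, and then sets
\[
[e,v]\wedge[f,u]=[\,v'\cdot(e\wedge d_{v'})\wedge u'\cdot(f\wedge d_{u'}),\,k\,],
\]
as in Lemma \ref{lem:meet_x}. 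The pair $(v'\cdot(e\wedge d_{v'}),k)$ represents a class that is in general strictly below $[e,v]$, so the alignment at the common prefix happens among lower bounds rather than among representatives; the verification that this element is the greatest lower bound then uses the canonical-form analysis along the lines you indicate. As written, your proposal never reaches a correct candidate for the meet, so part \eqref{i:m1} (and with it \eqref{i:m2}) is not established.
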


The remainder of this section will be devoted to the proof of Theorem \ref{th:main}. For $v\in A^*$ by $|v|$ we denote the length of $v$. The empty word is denoted by $1$ and we put $|1|=0$.
The following two lemmas hold under a milder assumption that $M(T,Y)$ is an ultra proper restriction semigroup.

\begin{lemma} \label{lem:x}\mbox{} 
\begin{enumerate}
\item \label{i:x1} Every $\sim$-class $B$ of $Y\times T$ has a unique representative, which we call {\em canonical}, of the form $(x,w)$ where 
$$
|w| = \min\{|u|\colon (y,u)\in B\}.
$$
If $(x,w)$ is a canonical representative and $(y,u)\sim (x,w)$ then $(y,u)=(x\circ t,wt)$ for some $t\in A^*$.
\item \label{i:x2} The equivalences $\sim$ and $\approx$ on $Y\times T$ coincide.
\end{enumerate}
\end{lemma}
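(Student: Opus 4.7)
The plan is to first establish part (1) by choosing any $(x,w) \in B$ with $|w|$ minimal and showing that every other element of $B$ is obtained from $(x,w)$ by a single $\leftarrow$-step, that is, has the form $(x \circ t, wt)$ for some $t \in A^*$. Part (2) will then fall out of Lemma~\ref{lem:class} combined with this canonical-form description and axiom (B).

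For part (1), fix such a minimum $(x, w)$ and argue by induction on the length of a zigzag chain of $\to$/$\leftarrow$ steps from $(x, w)$ to an arbitrary $(y, u) \in B$. The inductive hypothesis supplies $(y', u') = (x \circ t', w t')$ one step earlier, and the case split is on the direction of the last step together with, when it goes $\to$ and shortens the second coordinate, on which of $u$ and $w$ is a prefix of the other; freeness of $A^*$ forces exactly one prefix relation. In the generic subcases, a short calculation using the identity $(x \circ t') \circ p = x \circ (t' p)$ (valid whenever both sides are defined) identifies $(y, u)$ with $(x \circ t, wt)$ for the appropriate $t$. The only remaining case is where the new word $u$ is a \emph{strict} prefix of $w$: here the PDA axiom for the partially defined action $\cdot$, applied to the factorisation $p = q' t'$ arising from $up = wt'$, forces $q' \cdot x$ to be defined, producing the element $(q' \cdot x, u) \in B$ with $|u| < |w|$, contradicting the minimality of $|w|$. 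Uniqueness of the canonical representative follows at once: any other minimum-length representative must be of the form $(x \circ t, w t)$, forcing $|t| = 0$ and hence $t = 1$.

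For part (2), the inclusion $\sim\,\subseteq\,\approx$ is immediate from the definition. For the converse, suppose for contradiction that distinct $\sim$-classes $A, B$ satisfy $A \leq B$ and $B \leq A$; let $(x, w), (y, v)$ be their canonical representatives, with $|w| \leq |v|$ without loss of generality. The relation $A \leq B$ supplies $(x', s) \in A$ and $(y', s) \in B$ with $x' \leq y'$, and part (1) writes these as $(x \circ t, wt)$ and $(y \circ t', v t')$ respectively, so $wt = v t'$. Freeness of $A^*$ together with $|w| \leq |v|$ yields $v = w r$ and $t = r t'$, whence $(x \circ r) \circ t' \leq y \circ t'$ in $\mathrm{ran}(\varphi_{t'})$; the order-isomorphism axiom (B) then lets us cancel $t'$ to obtain $x \circ r \leq y$. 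The symmetric argument applied to $B \leq A$ produces $y \leq x \circ r$, hence $y = x \circ r$, so $(y, v) = (x \circ r, w r)$. Then $(x, w) \leftarrow (y, v)$ is a valid single step, placing both in the same $\sim$-class and contradicting $A \neq B$.

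The main obstacle will be the prefix case analysis in part (1), specifically the subcase where the new word $u$ has length strictly below $|w|$: the partially defined action property is precisely the tool needed to lift the step $(y', u') \to (y, u)$ into a direct $\to$-step from $(x, w)$ and thereby contradict minimality. Without (PDA) one would only know that the composite $(q' t') \cdot z$ is defined at $z = x \circ t'$, not that $q' \cdot x$ is by itself defined, and the whole argument collapses.
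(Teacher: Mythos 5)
Your proof is correct and follows essentially the same route as the paper: part (1) is the same zigzag induction with the prefix case analysis, in which the shortening subcase is ruled out via (PDA)/strongness (the paper fixes the extremal representative via the longest suffix on which the action is defined rather than by minimal word length, but the resulting contradiction is identical). For part (2) the paper concludes a bit more directly by applying Lemma~\ref{lem:preorder}\eqref{i:c1} twice to produce an element $(x',w)\in[x,w]_{\sim}$ with $x'\leq x$ and then invoking the canonical form, whereas you unwind the preorder and cancel $t'$ using axiom (B); this is only a cosmetic difference.
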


\begin{proof} (1) Let $(y,u)\in B$ and $v$ be the longest suffix of $u$ such that $v\cdot y$ is defined and put $x=v\cdot y$. Due to (PDA), $v'\cdot y$ is defined for any suffix $v'$ of $v$. Let $w\in A^*$ be such that $u=wv$ (any of the words $w$ or $v$ may be empty).
We have  $(y,u)\sim (x,w)$.
Assume  that $(z,r)\sim (x,w)$ and show that there is $t\in A^*$ such that $z=x\circ t$ and $r=wt$. By the definition of $\sim$, there is a sequence 
$$
(x,w)=(x_0,w_0), (x_1,w_1),\dots, (x_n,w_n)=(z,r)
$$
such that for each admissible $i$ we have $(x_i,w_i)\to (x_{i+1},w_{i+1})$ or $(x_{i+1},w_{i+1})\to (x_{i},w_{i})$. Note that we necessarily have $(x_1,w_1)\to (x_0,w_0)$ and so $(x_1,w_1)=(x_0\circ t, w_0t)=(x\circ t, wt)$ for some $t\in A^*$. We proceed by induction on $n$. Assume that $(x_i,w_i)=(x\circ t, wt)$. If $(x_{i+1},w_{i+1})\to (x_{i},w_{i})$, it is immediate that 
$(x_{i+1},w_{i+1})$ equals  $(x\circ ts, wts)$ for some $s\in A^*$. Consider now the case where $(x_i,w_i)\to (x_{i+1},w_{i+1})$. By assumption we have $(x_i,w_i)=(x\circ t, wt)$
and then $(x_{i+1},w_{i+1})=(p\cdot (x\circ t), q)$, where $p$ is a suffix of $wt$ and $q$ is determined by $qp=wt$.
Note that $p$ must be a suffix of $t$ for if we assume that $p=w't$ with $w'$ being non empty,  
we obtain that $w'\cdot x$ is defined (since $w't\cdot (x\circ t)$ is defined,  $t\cdot (x\circ t)$ is defined and $\cdot$ is strong), which contradicts the choice of $(x,w)$. This proves  that $(x,w)$ is  canonical, that it is unique and so the claim about the form of any element in its $\sim$-class holds.

(2) We prove that the preorder $\leq$ on $(Y\times T)/\sim$, defined before Lemma~\ref{lem:preorder}, is in fact an order. Indeed, assume that $[y,v]_{\sim}\leq [x,w]_{\sim}$, $[x,w]_{\sim}\leq [y,v]_{\sim}$ and  that $(x,w)$ is a canonical element. Since $[y,v]_{\sim}\leq [x,w]_{\sim}$, we have that $(y,v)\sim (z,w)$ for some $z\leq x$ by Lemma~\ref{lem:preorder}. Similarly, $[x,w]_{\sim}\leq [z,w]_{\sim}$ implies that $(x,w)\sim (x',w)$ for some $x'\leq z\leq x$. But the latter is possible only if $x'=x$ since, as we have proved, any element in $(x,w)_{\sim}$ has the form $(x\circ t, wt)$ for some $t\in A^*$. 
\end{proof}

\begin{example} {\em Let $F{\mathcal{RM}}(A)=M(A^*, {\mathcal Y}')$ be the free restriction monoid over $A$. The relation $\sim$ on ${\mathcal Y}'\times A^*$ is given by $(B,v)\sim (B',v')$ if and only if $v*B=v'*B'$, where we remind that $v*B=\{\mathrm{red}(vb)\colon b\in B\}$. It follows that $[B,v]_{\sim}\mapsto v*B$ defines a bijection between the sets $({\mathcal Y}'\times A^*)/\sim$ and  ${\mathcal Q}'$. It is easy to see that the order on $({\mathcal Y}'\times Z^*)/\sim$  corresponds to the anti-inclusion order on ${\mathcal Q}'$ under this bijection
and the canonical elements of ${\mathcal Y}'\times A^*$ are precisely the elements $(B,v)$ such that either $v=1$, or, otherwise, $z^{-1}\not\in B$ with $z$ being  the last letter of $v$. For an arbitrary element $(B,v)$ the canonical element in its $\sim$-class is the element $(w\cdot B, u)$, where $w$ is the longest suffix of $v$ such that $w^{-1}\in B$ and $v=uw$. Recall that the elements of ${\mathcal X}'$ can be interpreted as finite connected subgraphs in the Cayley graph of $F{\mathcal{G}}(A)$  and the elements of ${\mathcal Y}'$ as such subgraphs containing the origin (see \cite{S1,S} for details). Then the elements of ${\mathcal Q}'$ correspond to the finite connected subgraphs of the form $t*C$ where $C\in {\mathcal Y}'$ and $t\in A^*$.  If ${\Gamma}$ is such a subgraph then the canonical element $(B,v)$ corresponding to it under the bijection between ${\mathcal Q}'$ and $({\mathcal Y}'\times A^*)/\sim$  is determined as follows: $v$ the vertex of $\Gamma$ which is the closest to the origin (such a vertex exists since the graph is a tree) and $B=v^{-1}*{\Gamma}$.
}
\end{example}

Recall that the action $*$ of $T$ on $X$ which globalizes $\cdot$ is given by $t*[x,w]=[x,tw]$. 
Let the map $\alpha_t:X\to X$ be given by $x\mapsto t*x$. 

\begin{lemma}\mbox{}\label{lem:action_prop}
\begin{enumerate}
\item For every $t\in T$ ${\mathrm{ran}}(\alpha_t)$ is an order ideal of $X$.
\item $\alpha_t: X\to {\mathrm{ran}}(\alpha_t)$ is an order-isomorphism.
\end{enumerate}
\end{lemma}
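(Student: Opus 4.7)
For (1) the argument is immediate from Lemma~\ref{lem:preorder}(\ref{i:c1}). Suppose $[y,s]\le \alpha_t([x,w]) = [x,tw]$ in $X$. Since $(x,tw)\in[x,tw]_\sim$ and $[x,tw]_\sim \ge [y,s]_\sim$, Lemma~\ref{lem:preorder}(\ref{i:c1}) supplies $(u,tw)\in [y,s]_\sim$ with $u\le x$. Hence $[y,s]=[u,tw]=\alpha_t([u,w])\in \mathrm{ran}(\alpha_t)$.

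For (2), monotonicity of $\alpha_t$ is clear: if $[x,s]\le[y,r]$ pick $(x',w)\in[x,s]_\sim$ and $(y',w)\in[y,r]_\sim$ with $x'\le y'$; Lemma~\ref{lem:action} gives $[x,ts]=[x',tw]$ and $[y,tr]=[y',tw]$, whence $\alpha_t([x,s])\le \alpha_t([y,r])$. It remains to show that $\alpha_t$ reflects the order. Suppose $[x,ts]\le[y,tr]$. By Lemma~\ref{lem:preorder}(\ref{i:c1}) there is $(u,tr)\in[x,ts]_\sim$ with $u\le y$, so it suffices to prove $(x,s)\sim(u,r)$; then $[x,s]=[u,r]\le[y,r]$.

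To establish $(x,s)\sim(u,r)$, let $(x_0,s_0)$ be the canonical representative of the common class $[x,ts]_\sim=[u,tr]_\sim$ (Lemma~\ref{lem:x}(\ref{i:x1})). By that lemma, $(x,ts)=(x_0\circ p, s_0 p)$ and $(u,tr)=(x_0\circ q, s_0 q)$ for some $p,q\in A^*$ with $x_0\circ p$ and $x_0\circ q$ defined, so $s_0p=ts$ and $s_0q=tr$ hold in the free monoid $A^*$. If $|s_0|\ge|t|$, unique factorization in $A^*$ yields $s_0=ts_0'$, whence $s=s_0'p$ and $r=s_0'q$, so $(x,s) = (x_0\circ p, s_0'p) \to (x_0,s_0') \leftarrow (x_0\circ q, s_0'q) = (u,r)$. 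If $|s_0|<|t|$, unique factorization yields $t=s_0t'$ with $t'\ne 1$, hence $p=t's$ and $q=t'r$; since (PDA) for $\cdot$ entails (PDA) for $\circ$, from the fact that $x_0\circ(t's)$ is defined we deduce that $y_0:=x_0\circ t'$ is defined and that $x=y_0\circ s$ and $u=y_0\circ r$, so $(x,s)=(y_0\circ s,s)\to(y_0,1)\leftarrow(y_0\circ r,r)=(u,r)$. The main obstacle is precisely this second case, where the canonical word $s_0$ is \emph{shorter} than $t$ and the prefix $t$ is partially absorbed during the reduction to canonical form; overcoming it relies essentially on the freeness of $A^*$ (for unique prefix extraction) and on (PDA) for $\circ$ (to split $x_0\circ(t's)$ as $(x_0\circ t')\circ s$).
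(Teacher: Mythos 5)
Your proof is correct, and while it uses the same raw ingredients as the paper's --- canonical representatives from Lemma~\ref{lem:x}(\ref{i:x1}), unique prefix extraction in the free monoid $A^*$, condition (PDA), and a case split on whether the canonical prefix is at least as long as $t$ --- it organizes them differently in the order-reflection step. The paper computes the canonical representatives of $(x,tw)$ and $(y,tv)$ separately, relates them via $(q'\cdot z,p')=((q\cdot x)\circ m,pm)$, and splits on $|p|$ versus $|t|$. You instead first apply Lemma~\ref{lem:preorder}(\ref{i:c1}) to replace the larger class by a representative $(u,tr)$ lying in the \emph{same} $\sim$-class as $(x,ts)$ with $u\le y$, and then prove the cancellation statement $(x,ts)\sim(u,tr)\Rightarrow(x,s)\sim(u,r)$ from a single canonical representative $(x_0,s_0)$. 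This buys something concrete: in the paper's Case 2 the displayed chain passes through $(mq'\cdot y)\circ k$, yet $mq'\cdot y$ need not be defined --- by canonicity of $(q'\cdot y,p')$ it is undefined whenever $m\neq 1$ --- so that step really has to be repaired by replacing $y$ with the smaller element $z$ and concluding via $[x,w]=[z,v]\le[y,v]$, which is exactly the shape your argument has from the outset. Your second case also correctly invokes the fact that (PDA) for $\cdot$ transfers to the reverse action $\circ$ (the unlabelled lemma in the subsection on ultra proper restriction semigroups) in order to split $x_0\circ(t's)$ as $(x_0\circ t')\circ s$; both halves of your case analysis check out.
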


\begin{proof}
(1) Assume that $[y,v]\leq t*[x,w]=[x,tw]$. Then $[y,v]=[z,tw]$, where $z\leq x$, so that $[y,v]=t*[z,w]$. 

(2) It is immediate that $*$ is order-preserving. We now
assume that $t*[x,w]\leq t*[y,v]$ and show that $[x,w]\leq [y,v]$.
Let $q$ be the longest suffix of $tw$ such that $q\cdot x$ is defined and let $tw=pq$. Then
$$
[x,tw]=[x,pq]=[q\cdot x, p]
$$ 
and the element $(q\cdot x,p)$ is canonical.
Similarly, $[y,tv]=[q'\cdot y, p']$ with the element $(q'\cdot y, p')$ being canonical, where $q'$ is the longest suffix of $tv$ such that $q'\cdot y$ is defined. Assume that $tv=p'q'$. Since $[q\cdot x, p]\leq [q'\cdot y, p']$, there is some $z\leq y$ such that
$$
[q\cdot x, p]=[q'\cdot z, p'].
$$
Canonicity of $(q\cdot x,p)$ implies that
$$
(q'\cdot z, p')=((q\cdot x)\circ m, pm)
$$
for some $m\in T$. It follows that 
$$
q\cdot x=mq'\cdot z \text{ and } pm=p'.
$$
Then we have $tw=pq$ and $tv=pmq'$. We consider two possible cases:

{\em Case 1.} Assume that $|p|\geq |t|$. Then $|q|\leq |w|$ and $|mq'|\leq |v|$, which implies that
$w=rq$ and $v=rmq'$ for some suffix $r$ of $p$. Then we have:
$$
[x,w]=[x,rq]=[q\cdot x,r]=[mq'\cdot z, r]=[q'\cdot z, rm]\leq [q'\cdot y, rm]=[y,rmq']=[y,v].
$$

{\em Case 2.} Assume that $|p|< |t|$. Then $|q|>|w|$ and $|mq'|>|v|$. Thus $q=kw$ and $mq'=kv$ for some suffix $k$ of $t$. Since $q\cdot x$ defined,  $w\cdot x$ is also defined and we have
$$
w\cdot x  = (q\cdot x)\circ k = (mq'\cdot z)\circ k\leq (mq'\cdot y)\circ k = (kv\cdot y)\circ k=v\cdot y.
$$
Therefore, $[x,w]=[w\cdot x,1]\leq [v\cdot y,1]=[y,v]$, which completes the proof.
\end{proof}

From now on we assume that $M(T,Y)$ is an ultra $F$-restriction monoid. For $v\in T$ let $d_v$ and $r_v$ denote the top elements of ${\mathrm{dom}}(\varphi_v)$ and  ${\mathrm{ran}}(\varphi_v)$, respectively. 

\begin{lemma}\label{lem:meet_x}
$X$ is a semilattice. The meet on $X$ is calculated by the rule:
\begin{equation}\label{eq:meet_x}
[e,v]\wedge [f,u] = [v'\cdot (e\wedge d_{v'})\wedge u'\cdot (f\wedge d_{u'}),k],
\end{equation}
where $k$ is the longest common prefix of $v$ and $u$ and $v=kv'$, $u=ku'$.
\end{lemma}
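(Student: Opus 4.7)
My plan is to verify the given formula by establishing two things: that its right-hand side is a lower bound of $[e,v]$ and $[f,u]$, and that it is the greatest such. Write $a := v'\cdot(e\wedge d_{v'}) \wedge u'\cdot(f\wedge d_{u'})$. Well-definedness is immediate: $e\wedge d_{v'}\le d_{v'}$ makes $v'\cdot(e\wedge d_{v'})$ defined (analogously for the $u'$ term), and the meet exists in $Y$ since $Y$ is a semilattice. To see $[a,k]\le[e,v]$, observe that the arrow $(e\wedge d_{v'},\,v)=(e\wedge d_{v'},\,kv')\to(v'\cdot(e\wedge d_{v'}),\,k)$ gives $[v'\cdot(e\wedge d_{v'}),k]=[e\wedge d_{v'},v]$, which is $\le[e,v]$ because $e\wedge d_{v'}\le e$; combining with $a\le v'\cdot(e\wedge d_{v'})$ yields the desired inequality. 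The case $[a,k]\le[f,u]$ is symmetric.

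The core of the argument is the greatest-lower-bound property. Suppose $[g,w]\le[e,v]$ and $[g,w]\le[f,u]$. By Lemma \ref{lem:preorder}(1), there exist representatives $(e_1,v),(f_1,u)\in[g,w]$ with $e_1\le e$ and $f_1\le f$. Let $(g_0,w_0)$ be the canonical representative of $[g,w]$ supplied by Lemma \ref{lem:x}(1). That same lemma produces $s_1,s_2\in T$ with $v=w_0 s_1$, $u=w_0 s_2$, $e_1=g_0\circ s_1$, $f_1=g_0\circ s_2$, both $\circ$-expressions being defined. Hence $w_0$ is a common prefix of $v$ and $u$ in the free monoid $A^*$, so $w_0$ is a prefix of the longest common prefix $k$; writing $k=w_0k_0$ and cancelling in $A^*$ gives $s_1=k_0 v'$ and $s_2=k_0 u'$.

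The equality $e_1=g_0\circ(k_0 v')$ is equivalent to $g_0=(k_0 v')\cdot e_1$; by (PDA) this decomposes as $g_0=k_0\cdot(v'\cdot e_1)$, so $h:=v'\cdot e_1$ is defined and equals $g_0\circ k_0$. The symmetric computation yields $u'\cdot f_1=g_0\circ k_0=h$ as well. Since $v'\cdot e_1$ is defined we have $e_1\le d_{v'}$, and combined with $e_1\le e$ this gives $e_1\le e\wedge d_{v'}$, whence $h=v'\cdot e_1\le v'\cdot(e\wedge d_{v'})$; symmetrically $h\le u'\cdot(f\wedge d_{u'})$, so $h\le a$. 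Finally, the arrow $(e_1,v)=(h\circ v',\,kv')\to(h,k)$ shows $[h,k]=[g,w]$, and then $h\le a$ gives $[g,w]=[h,k]\le[a,k]$. This completes the proof that $[a,k]$ is the meet.

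The main obstacle is the middle paragraph: recognizing that the canonical word $w_0$ of $[g,w]$ must be a prefix of the longest common prefix $k$ of $v$ and $u$. This is where freeness of $T=A^*$ enters crucially, since the argument relies on left cancellation and the uniqueness of prefixes; it is also what makes the formula pivot at the word $k$. Once this combinatorial observation is in place, the remaining manipulations are routine applications of (PDA) and of the duality between $\cdot$ and $\circ$.
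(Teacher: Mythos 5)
Your proof is correct and follows essentially the same route as the paper's: the same candidate $v'\cdot(e\wedge d_{v'})\wedge u'\cdot(f\wedge d_{u'})$, the same use of Lemma \ref{lem:preorder}(1) and the canonical-representative Lemma \ref{lem:x}(1) to see that the canonical word of a lower bound is a prefix of $k$, and the same (PDA)-splitting $g_0\circ(k_0v')=(g_0\circ k_0)\circ v'$ to reduce the lower bound to the form $[h,k]$ with $h\le a$. The only cosmetic difference is that the paper starts from a lower bound already in canonical form, whereas you pass to it explicitly.
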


\begin{proof}
Let $[e,v], [f,u]\in X$.  Let, further, $k$ be the longest common prefix of $v$ and $u$ and assume that $v=kv'$ and $u=ku'$. We put
$$
g=v'\cdot (e\wedge d_{v'})\wedge u'\cdot (f\wedge d_{u'})
$$
and show that $[g,k]=[e,v]\wedge [f,u]$. To verify that $[e,v]\geq [g,k]$, it is enough to show that
$[g,k]= [e',v]$ for some $e'\leq e$. Since $g\leq v'\cdot (e\wedge d_{v'})$, the element $g\circ v'$ is defined and $g\circ v'\leq e\wedge d_{v'}$. Therefore,
$$
[g,k]=[g\circ v', kv']\leq [e\wedge d_{v'}, v]\leq [e,v].
$$
Similarly, $[g,k]\leq [f,u]$.

Suppose that $[h,t]\leq [e,v],[f,u]$, where we assume that the element $(h,t)$ is canonical. Then there are $e'\leq e$ and $f'\leq f$ such that $[h,t]=[e',v]=[f',u]$. Since $(h,t)$ is canonical, Lemma \ref{lem:x}\eqref{i:x1} implies that $$
e'=h\circ p, v=tp, f'=h\circ q, u=tq
$$
for some $p,q\in T$. Hence $t$ is a common prefix of $v$ and $u$, so that $k=tl$ for some $l\in T$ by maximality of $k$. We also have $p=lv'$ and $q=lu'$. Since $h\circ p$ is defined,  $h\circ l$ and $(h\circ l)\circ v'$ are defined and
$e'=h\circ p=(h\circ l)\circ v'$ by (PDA). Similarly, $h\circ q=(h\circ l)\circ u'$. Then
$$
[e',v]=[h\circ lv', tlv']=[h\circ l, tl] = [h\circ l, k].
$$
Since $h\circ p\leq e$ and $h\circ p = (h\circ l)\circ v' \leq d_{v'}$, we obtain $h\circ p\leq e\wedge d_{v'}$. Therefore,
$$
h\circ l = v'\cdot (h\circ p)\leq v'\cdot (e\wedge d_{v'}).
$$
Similarly,  $h\circ l\leq u'\cdot (f\wedge d_{u'})$.
Therefore, $h\circ l\leq g$, so that  $[h,t]=[h\circ l, k]\leq [g,k]$, and the proof is complete.
\end{proof}

By Lemmas \ref{lem:action_prop} and \ref{lem:meet_x} we may form the $W$-product $W(T,X)$.  As it is shown in \ Subsection \ref{sub:w}, we have  $W(T,X)=M(T,X)$. 
To complete the proof of Theorem~\ref{th:main} we show that $M(T,\overline{Y})$ embeds into $W(T,X)$. Let $\hat{\cdot}$ be the left partially defined action of $T$ on $\overline{Y}$ isomorphic to $\cdot$ via $[y,1]\mapsto y$. 
Then the action $*$ is an extension of $\hat{\cdot}$ and the right partially defined action $\bullet$ reverse to $*$ is an extension of the right partially defined action $\circ$ reverse to $\hat{\cdot}$. 
Therefore,
\begin{multline}\label{eq:aux20}
M(T,\overline{Y})=\{([y,1], t)\in \overline{Y}\times T\colon   [y,1]\circ t \text{ is defined} \}= \\
\{([y,1], t)\in \overline{Y}\times T \colon  [y,1] \bullet t \text{ is defined and }[y,1] \bullet t \in \overline{Y}\}\subseteq M(T,X)=W(T,X).
\end{multline}
By Proposition \ref{prop:mult} we obtain that $M(T,\overline{Y})$ is a $(2,1,1)$-subalgebra of $M(T,X)$.

\begin{example}\label{ex:free4}{\em  Consider the free restriction monoid $F{\mathcal RM}(A)= M(A^*, {\mathcal Y}')$. As already remarked, the semilattice
${(\mathcal Y}'\times A^*)/\sim$ is order isomorphic to ${\mathcal Q}'$. It is easy to see that the action of $A^*$ on ${(\mathcal Y}'\times A^*)/\sim$, which globalizes $\cdot$, is isomorphic to the action $*$ of $A^*$ on ${\mathcal Q}'$ given by $v*E=\{{\mathrm{red}}(ve)\colon e\in E\}$. It follows that the monoid $W(T,X)$ from Theorem \ref{th:main}\eqref{i:m2} is isomorphic to the monoid $W(A^*,{\mathcal Q}')$ and the embedding of $F{\mathcal RM}(A)$ into $W(A^*,{\mathcal Q}')$, produced by the proof of Theorem \ref{th:main}, coincides with the (left hand version of the) embedding constructed by Szendrei in \cite{S1}. }
\end{example}

\begin{remark} {\em We remark that Lemma \ref{lem:meet_x} and thus also Theorem \ref{th:main} can be formulated and proved in a slightly more general setting where $M(T,X)$ is an ultra proper restriction semigroup and ${\mathrm{dom}}(\varphi_t)$ is a principal order ideal for all $t\neq 1$. Examples where this setting arises are the free restriction semigroup $F{\mathcal{RS}}(A)$ and also the semigroup $M(T,E)$ from Section \ref{sub:modification}. The resulting construction of embedding of  $M(T,E)$ into a $W$-product then generalizes Szendrei's embedding of $F{\mathcal{RS}}(A)$ into a $W$-product \cite{S1}.}
\end{remark}

We deduce the following result which is equivalent to the main result of \cite{S1}, but the formulation below emphasizes the fact that the covering semigroups and monoids can be chosen ultra proper or ultra $F$-restriction.

\begin{theorem}\label{th:ms1}\mbox{}
\begin{enumerate}
\item   Every restriction monoid $S=\langle A \rangle$ has an ultra $F$-restriction (ample) cover $M(A^*, P(S))$ which $(2,1,1)$-embeds into a $W$-product $W(A^*, X)$.
\item Every restriction semigroup $S=\langle A\rangle$ has an ultra proper (ample) cover $M(A^*, P(S))$ which $(2,1,1)$-embeds into a $W$-product $W(A^*, X')$.
\end{enumerate}
\end{theorem}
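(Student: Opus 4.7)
The plan is to combine the cover constructions of Subsections \ref{sub:ultra_f_cover} and \ref{sub:modification} with the embedding supplied by Theorem \ref{th:main}; both halves reduce to a routine composition once one observes that the cover construction produces exactly the kind of input Theorem \ref{th:main} needs, the base monoid $T=A^*$ being free by the very choice of the generating set.

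For part (1), given the $A$-generated restriction monoid $S$ with $E=P(S)$, I first invoke Lemma \ref{lem:important}: it manufactures a left partially defined action of $T=A^*$ on $E$ such that $M(A^*,E)$ is an ultra $F$-restriction ample monoid, together with a surjective projection-separating $(2,1,1,0)$-morphism $(e,v)\mapsto e\overline{v}$ from $M(A^*,E)$ onto $S$. This already realizes the advertised ultra $F$-restriction (ample) cover. I then feed the same partially defined action into Theorem \ref{th:main}: its hypotheses (free base monoid, ultra $F$-restriction) are satisfied on the nose, so it produces a semilattice $X$, a suitable left action of $A^*$ on $X$ permitting formation of the $W$-product $W(A^*,X)$, and a $(2,1,1)$-embedding $M(A^*,E)\hookrightarrow W(A^*,X)$. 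Combining this embedding with the cover map completes part~(1).

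For part (2), given the $A$-generated restriction semigroup $S$ with $E=P(S)$, I apply the variation of Subsection \ref{sub:modification} to obtain the ultra proper ample cover $M(A^*,E)\to S$, where the empty word $\epsilon$ acts as the identity partial map. The resulting $M(A^*,E)$ need not be $F$-restriction, because ${\mathrm{dom}}(\varphi_\epsilon)=E$ may lack a top element; however, for every non-empty $v\in A^*$ the domain ${\mathrm{dom}}(\varphi_v)$ is the principal order ideal generated by $\overline{v}^*$. As noted in the remark following Example \ref{ex:free4}, Theorem \ref{th:main} continues to hold under this weaker hypothesis; applying the extension produces the semilattice $X'$, the $W$-product $W(A^*,X')$, and the embedding $M(A^*,E)\hookrightarrow W(A^*,X')$ needed to finish.

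The only point requiring real care is the generalization invoked in part (2): I must verify that the proofs of Lemmas \ref{lem:action_prop} and \ref{lem:meet_x} use the top elements $d_v$, $r_v$ only when $v\ne 1$. Inspection of the meet formula~\eqref{eq:meet_x} shows that the two arguments $v'\cdot(e\wedge d_{v'})$ and $u'\cdot(f\wedge d_{u'})$ arise from the pieces $v',u'$ of $v,u$ remaining after stripping the longest common prefix $k$; in the degenerate cases $v'=1$ or $u'=1$ the corresponding factor collapses to $e$ or $f$ respectively, so the formula remains well-defined without assuming $d_1$ exists. This is the main (and only substantive) verification beyond the routine splicing of the two machines.
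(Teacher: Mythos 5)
Your part (1) is exactly the paper's proof: cite Lemma \ref{lem:important} for the cover and Theorem \ref{th:main} for the embedding. Part (2), however, takes a genuinely different route. You apply the generalized form of Theorem \ref{th:main} (the extension, noted in the remark after Example \ref{ex:free4}, to ultra proper semigroups whose domains ${\mathrm{dom}}(\varphi_t)$ are principal order ideals for all $t\neq 1$) directly to the semigroup $M(A^*,P(S))$, which forces you to re-inspect Lemmas \ref{lem:action_prop} and \ref{lem:meet_x} and check that the top elements $d_v$, $r_v$ are never invoked for $v=1$ --- a verification you sketch correctly, since in the degenerate cases $v'=1$ or $u'=1$ of formula \eqref{eq:meet_x} the factor $v'\cdot(e\wedge d_{v'})$ indeed collapses to $e$. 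The paper instead sidesteps this entirely: it uses the observation from Section \ref{sub:modification} that $M(A^*,P(S))$ is a $(2,1,1)$-subalgebra of the ultra $F$-restriction \emph{monoid} $M(A^*,P(S)^1)$ obtained by adjoining an identity, applies the original Theorem \ref{th:main} to that monoid to get $W(A^*,X')$, and lets $M(A^*,P(S))$ embed via the composite. The paper's route costs nothing beyond the already-established fact $M(A^*,P(S)^1)=M(A^*,P(S))\cup\{(\epsilon,1)\}$ and reuses Theorem \ref{th:main} verbatim; your route avoids enlarging the semilattice of projections to $P(S)^1$ but shifts the burden onto the (only sketched in the paper) generalization of Section \ref{s:emb}. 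Both arguments are sound; if you keep yours, you should spell out the verification for Lemma \ref{lem:action_prop} as well, not just for the meet formula.
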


\begin{proof} (1) follows from Lemma \ref{lem:important} and Theorem \ref{th:main}. 

(2) The semigroup $M(A^*, P(S))$, constructed in Section \ref{sub:modification}, is an ultra proper cover of $S$ and is a $(2,1,1)$-subalgebra of  the ultra $F$-restriction monoid $M(A^*, P(S)^1)$. The latter can be $(2,1,1)$-embedded into a $W$-product $W(A^*, X')$ by Theorem \ref{th:main}.
\end{proof}

\section{An embedding of a restriction semigroup into a quotient of $W(A^*,X)$}\label{s:quot}

Let $S=\langle A\rangle$ be a restriction monoid  with $E=P(S)$.  Let, further, $M(A^*,E)$ be the ultra $F$-restriction cover of $S$ from Section \ref{sub:ultra_f_cover} and $W(A^*,X)$ be the $W$-product, produced by the proof of Theorem \ref{th:main}.  In this final section we construct a projection separating $(2,1,1)$-congruence $\kappa$ on $W(A^*,X)$ such that $S$ $(2,1,1)$-embeds into  $W(A^*,X)/\kappa$. This yields a new and simpler proof of the main result of \cite{S} that any restriction semigroup embeds into an almost left factorizable restriction semigroup.

We set $W=W(A^*,X)$ and define 
$W_1$ to be the subset of $W$ consisting of the elements, which can be written in the form $([e,p],pq)$. We may assume that the element $(e,p)$ is canonical as otherwise we have that $p=rt$ and $e=f\circ t$, where $(f,r)$ is the canonical element equivalent to $(e,p)$, so that $([e,p],pq)=([f,r],rtq)$.

We define an auxiliary relation $\gamma$ on $W_1$ by setting $x\mathrel{\gamma} y$ if and only if
$$
x=([e,p],pq), y=([e,p],pr) \text{ and } e\overline{q}=e\overline{r},
$$
where the element $(e,p)$ is canonical. 
For $x=([e,p],pq)\in W_1$ with $(e,p)$ being canonical we put 
$${\mathrm{inv}}(x)=e\overline{q}.$$

For $x,y\in W$ we put
$x\mathrel{\kappa} y$ if and only if  $x=y$ or  $x,y\in W_1$ and $x\mathrel{\gamma} y$.

\begin{lemma}\label{lem:kappa}
The relation $\kappa$ is a $(2,1,1)$-congruence on $W$.
\end{lemma}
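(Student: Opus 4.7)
My plan is to verify directly the three conditions defining a $(2,1,1)$-congruence: $\kappa$ is an equivalence relation, it is compatible with $^*$ and $^+$, and it is compatible with the multiplication. The first step is to check that $\mathrm{inv}\colon W_1\to S$ is well-defined (uniqueness of canonical representatives, Lemma \ref{lem:x}(\ref{i:x1}), pins down $(e,p)$ from $[e,p]$, and then $q$ is determined by $p$ and $pq$) and to observe that reflexivity, symmetry, and transitivity of $\gamma$ follow formally, since chained $\gamma$-relations share a single canonical first coordinate $(e,p)$. Compatibility with the unary operations then falls out immediately: if $x = ([e,p],pq)$ and $y = ([e,p],pr)$ are $\gamma$-related, then $x^+ = ([e,p],1) = y^+$, while the formula $e\circ v = (e\overline{v})^*$ from Section \ref{sub:ultra_f_cover} gives $e\circ q = (e\overline{q})^* = (e\overline{r})^* = e\circ r$, so $x^* = y^*$.

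The bulk of the work is the multiplicative compatibility. Here I would exploit the simplified form of the $W$-product multiplication $(\xi_1,t_1)(\xi_2,t_2) = (\xi_1\wedge t_1*\xi_2,t_1t_2)$, which follows from associativity of the global action $*$. Fix $z = (\eta,w)\in W$. The left-multiplication case is the easier one: $zx$ and $zy$ have identical first coordinates $\eta\wedge w*[e,p]$, which does not depend on $q$ or $r$. Their common canonical first coordinate $(e^*,p^*)$ satisfies that $p^*$ is a prefix of $wp$ (by Lemma \ref{lem:meet_x}), so both $zx$ and $zy$ belong to $W_1$. Writing $wp = p^*a$, the required equality $\mathrm{inv}(zx) = \mathrm{inv}(zy)$ reduces to $e^*\overline{a}\,\overline{q} = e^*\overline{a}\,\overline{r}$, which I would derive from the inequality $(e^*\overline{a})^*\leq e$ (a consequence of $[e^*,p^*]\leq[e,wp]$ in $X$ via Lemma \ref{lem:preorder}) together with $xx^* = x$ and the hypothesis $e\overline{q}=e\overline{r}$.

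The harder case is right multiplication, where $xz$ and $yz$ have first coordinates that a priori depend on $q$ and $r$ respectively. Writing $\eta = [e_2,s_2]$ canonically and applying Lemma \ref{lem:meet_x}, the plan is to simplify these coordinates using the identities $v\cdot h = (\overline{v}h)^+$, $\overline{v}\,\overline{v}^* = \overline{v}$, and $xy^+ = (xy)^+x$, reducing them both to the projection $(e\overline{qs_2}e_2)^+ e$, respectively $(e\overline{rs_2}e_2)^+ e$, inside $E$; these coincide because $e\overline{q} = e\overline{r}$ implies $e\overline{qs_2}e_2 = e\overline{rs_2}e_2$. A parallel manipulation based on the identity $m^*\overline{p_0} = \overline{p_0}m$, where $(m,p)\sim(m^*,p^*)$ is the canonical form of the common first coordinate and $p = p^* p_0$, then yields $\mathrm{inv}(xz) = \mathrm{inv}(yz)$. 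The main obstacle lies precisely in this last step: one has to manipulate the meet formula of Lemma \ref{lem:meet_x} so that every remaining occurrence of $\overline{q}$ is flanked on its left by $e$, at which point the hypothesis $e\overline{q} = e\overline{r}$ can be invoked.
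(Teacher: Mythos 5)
Your proposal is correct and follows essentially the same route as the paper's proof: the same case split (equivalence, unary operations, left and right translation), the same use of the canonical representatives from Lemma \ref{lem:x} and the meet formula of Lemma \ref{lem:meet_x}, and the same key reduction of each $\mathrm{inv}$-computation to an expression in which $\overline{q}$ is preceded by $e$ so that $e\overline{q}=e\overline{r}$ applies. The only (cosmetic) divergence is in the $zx$ versus $zy$ case, where you obtain $(e^\ast\overline{a})^\ast\leq e$ abstractly from the order on $X$ via Lemma \ref{lem:preorder} rather than by the paper's explicit expansion of the meet, but the underlying mechanism is identical.
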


\begin{proof} It is immediate that this is an equivalence relation. Let $x\mathrel{\kappa}y$.  We may assume that $x,y\in W_1$ and that $x=([e,p],pq)$, $y=([e,p],pr)$, where $(e,p)$ is a canonical element and $e\overline{q}=e\overline{r}$.
It is immediate that $x^+=([e,p],1)=y^+$. We show that $x^*=y^*$.
We have 
$$
x^*=([e\circ q,1],1), \,\, y^*=([e\circ r,1],1).
$$
The elements $(e\circ q,1)$ and $(e\circ r,1)$ are clearly canonical. The needed equality follows from
$$
e\circ q=(e\overline{q})^*=(e\overline{r})^*=e\circ r.
$$ 

We now show that $\kappa$ is stable with respect to the multiplication on the left and on the right.
Let $z=([f,a],b)\in W$.
We first  show that $xz\mathrel{\kappa} yz$. Applying \eqref{eq:meet_x} we write
\begin{equation}\label{eq:aux11}
xz=([e,p]\wedge [f,pqa], pqb)=([e(\overline{qa}f)^+,p],pqb]).
\end{equation} 
Note that
\begin{align*}
e(\overline{qa}f)^+ & = (e(\overline{qa}f)^+)^+ & \text{since } x^+=x \text{ for } x\in E\\
& = (e\overline{q}\,\overline{a}f)^+ & \text{by }\eqref{eq:consequences}\\
\end{align*}
and similarly $e(\overline{ra}f)^+ =  (e\overline{r}\,\overline{a}f)^+$. Since $e\overline{q}=e\overline{r}$, it follows that $e(\overline{qa}f)^+= e(\overline{ra}f)^+$, let us denote this element by $e'$. Then
$$
xz=([e',p],pqb)\in W_1 \text{ and } yz=([e',p],prb)\in W_1.
$$
Let $p=p_1p_2$ be a factorization such that the element $(p_2\cdot e',p_1)$ is canonical.  We have 
\begin{align*}
{\mathrm{inv}}(xz) & =(p_2\cdot e')\overline{p_2qb} & \\
& = (\overline{p_2}e')^+\overline{p_2}\overline{qb} & \\
& = \overline{p_2}e'\overline{q}\,\overline{b} & \text{by Lemma }\ref{lem:lem1}\eqref{i:b2}
\end{align*}
and similarly ${\mathrm{inv}}(yz)=\overline{p_2}e'\overline{r}\,\overline{b}$.
Our assumption that $x\mathrel{\kappa} y$ and  the inequality $e'\leq e$ imply the equality $e'\overline{q}=e'\overline{r}$. We obtain that
$xz\mathrel{\kappa}yz$.

We now show that $zx\mathrel{\kappa} zy$. We have
$$
zx=([f,a],b)([e,p],pq)=([f,a]\wedge [e,bp],bpq).
$$
Let $c$ be the longest common prefix of $a$ and $bp$, so that $a=ca'$ and $bp=cb'$. Then 
$$
[f,a]\wedge [e,bp]=[(\overline{a'}f)^+(\overline{b'}e)^+,c],
$$
and thus
$$
zx=([(\overline{a'}f)^+(\overline{b'}e)^+,c], cb'q)\in W_1.
$$
Substituting $q$ with $r$ in the above calculation for $zx$, we obtain
$$
zy=([(\overline{a'}f)^+(\overline{b'}e)^+,c], cb'r)\in W_1.$$

Let $c=kl$, where the element $(l\cdot((\overline{a'}f)^+(\overline{b'}e)^+),k)$ is canonical. Then
\begin{align*}
{\mathrm{inv}}(zx)& =(\overline{l}(\overline{a'}f)^+(\overline{b'}e)^+)^+\overline{lb'q} & \\
& = \overline{l}(\overline{a'}f)^+(\overline{b'}e)^+\overline{b'q} & \text{by Lemma } \ref{lem:lem1}\eqref{i:b2}  \text{ since }\overline{l}(\overline{a'}f)^+(\overline{b'}e)^+\leq \overline{l}\\
& = \overline{l}(\overline{a'}f)^+\overline{b'}e\overline{q} & \text{by Lemma } \ref{lem:lem1}\eqref{i:b2}  \text{ since }\overline{b'}e\leq \overline{b'}.
\end{align*}
Hence ${\mathrm{inv}}(zx)={\mathrm{inv}}(zy)$, so that $zx\mathrel{\kappa} zy$, as required.
We have verified that $\kappa$ is a $(2,1,1)$-congruence on $W$.
That $\kappa$ is projection separating is immediate from its definition.
\end{proof}

\begin{theorem}
[Szendrei \cite{S}] Every restriction semigroup is $(2,1,1)$-embeddable into a $(2,1,1)$-morphic image of a $W$-product of a semilattice by a monoid.
\end{theorem}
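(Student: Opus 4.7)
The plan is to reduce to the monoid case that has been essentially built up in Section \ref{s:quot}, and then finish by noting that every restriction semigroup sits inside its monoid-with-identity-adjoined as a $(2,1,1)$-subalgebra.

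Concretely, let $S$ be a restriction semigroup. Form the restriction monoid $S^1$ by adjoining an identity as in Section \ref{sub:modification}, pick any generating set $A$ of $S^1$ as a $(2,1,1,0)$-algebra, and put $E^1=P(S^1)$. By Lemma \ref{lem:important}, $M(A^*,E^1)$ is an ultra $F$-restriction cover of $S^1$ with projection separating $(2,1,1,0)$-morphism $\phi\colon(e,v)\mapsto e\overline{v}$. By Theorem \ref{th:main}, $M(A^*,E^1)$ $(2,1,1)$-embeds into $W=W(A^*,X)$ via $\iota\colon(e,v)\mapsto([e,1],v)$, where we use the identification $E^1\cong\overline{E^1}$. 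Finally, Lemma \ref{lem:kappa} gives a projection separating $(2,1,1)$-congruence $\kappa$ on $W$; let $q\colon W\to W/\kappa$ be the quotient.

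The heart of the proof is to show that $q\circ\iota$ has the same kernel as $\phi$, so that $\phi$ factors through $q\circ\iota$ and gives a well-defined injective $(2,1,1)$-morphism $\psi\colon S^1\to W/\kappa$. The key observation is that every pair $(e,1)$ is canonical in the sense of Lemma \ref{lem:x}, because its $A^*$-component has the minimal possible length $0$; hence $\iota(e,v)=([e,1],1\cdot v)$ lies in $W_1$ with canonical first component $(e,1)$, prefix $p=1$, and tail $q=v$. Consequently $\iota(e,v)\mathrel{\kappa}\iota(f,u)$ holds iff either the pairs coincide or they are $\gamma$-related in $W_1$, which by the definition of $\gamma$ amounts to $e=f$ together with $e\overline{v}=e\overline{u}$. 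On the other hand, $\phi(e,v)=\phi(f,u)$ means $e\overline{v}=f\overline{u}$; applying $^+$ and using the identity $(xy)^+=(xy^+)^+$ from \eqref{eq:consequences} together with $e\leq \overline{v}^+$ (which holds because $(e,v)\in M(A^*,E^1)$) gives $(e\overline{v})^+=e$ and $(f\overline{u})^+=f$, forcing $e=f$ and then $e\overline{v}=e\overline{u}$. Hence the two kernels coincide. Since $\phi$ is surjective and both maps are $(2,1,1)$-morphisms, the induced $\psi$ is a $(2,1,1)$-embedding.

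Composing $\psi$ with the natural $(2,1,1)$-embedding $S\hookrightarrow S^1$ (the operations $^*$ and $^+$ on $S$ are inherited from $S^1$) yields the required $(2,1,1)$-embedding of $S$ into $W/\kappa$, which is a $(2,1,1)$-morphic image of the $W$-product $W(A^*,X)$. The main obstacle is the kernel comparison in the previous paragraph; once the canonicity of pairs of the form $(e,1)$ is in hand and the defining conditions of $\gamma$ are unpacked, the match between the fibres of $\phi$ and the $\gamma$-classes meeting $\iota(M(A^*,E^1))$ reduces to a short calculation with the restriction-semigroup identities.
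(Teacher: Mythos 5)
Your proposal is correct and follows essentially the same route as the paper: cover by the ultra $F$-restriction monoid $M(A^*,E^1)$, embed it into $W(A^*,X)$ via Theorem \ref{th:main}, and pass to the quotient by the congruence $\kappa$ of Lemma \ref{lem:kappa}, whose restriction to the image of the cover is exactly the kernel of the covering map (your canonicity-of-$(e,1)$ and $(e\overline{v})^+=e$ computation is precisely what justifies the paper's assertion that $([x,1],s)\mathrel{\kappa}([y,1],t)$ iff $x\overline{s}=y\overline{t}$). The only cosmetic difference is that you cover and embed all of $S^1$ and then restrict to $S$, whereas the paper works with the subalgebra $M(A^*,E)$ covering $S$ directly.
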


\begin{proof} Let $S=\langle A\rangle$ be a restriction semigroup and  $M(A^*,E)$  the ultra proper cover of $S$ from Section \ref{sub:modification}. As it is explained in Section \ref{sub:modification}, by adjoining an identity to $M(A^*,E)$ we obtain an ultra $F$-restriction monoid $M(A^*,E^1)$, and $M(A^*,E)$ is a $(2,1,1)$-subalgebra of $M(A^*,E^1)$. Let $W=W(A^*,X)$ be the $W$-product produced by the proof of Theorem~\ref{th:main} out of the monoid $M(A^*,E^1)$ and $\kappa$ be the congruence on $W$ constructed in this section. The definition of $\kappa$ implies that $([x,1],s)\mathrel{\kappa} ([y,1],t)$ if and only if $x\overline{s}=y\overline{t}$. Since the quotient of $M(A^*,\overline{E})$ over this congruence is isomorphic to $S$, the latter embeds into~$W/\kappa$. 
\end{proof}

\section*{Acknowlegement}

The author thanks Victoria Gould and  the anonymous referee  for many useful comments and suggestions.


\vspace{0.2cm}

\noindent
Institute Jo\v zef \v Stefan,\\ 
Jamova cesta 34, SI-1000, Ljubljana, SLOVENIA\\
e-mail: {\tt ganna.kudryavtseva\symbol{64}ijs.si}\\
and\\
Institute of Mathematics, Physics and Mechanics, \\
Jadranska ulica  19, SI-1000, Ljubljana, SLOVENIA\\
\end{document}